\numberwithin{equation}{section}
\numberwithin{subsection}{section}
\newtheorem*{namedtheorem}{\theoremname}
\newcommand{\theoremname}{testing}
\newtheorem{theorem}{Theorem}
\newtheorem{proposition}[theorem]{Proposition}
\newtheorem{proposition-definition}[theorem]
{Proposition-Definition}
\newtheorem{corollary}[theorem]{Corollary}
\newtheorem{lemma}[theorem]{Lemma}
\newtheorem*{theorem*}{Theorem}
\theoremstyle{definition}
\newtheorem{definition}[theorem]{Definition}
\newtheorem{remark}[theorem]{Remark}
\newtheorem{question}{Question}
\newtheorem*{question*}{Question}
\theoremstyle{remark}
\renewcommand{\mathcal}{\mathscr}
 \newcommand\cB{\mathcal{B}}
\newcommand\cG{\mathcal{G}}
 \newcommand\cX{\mathcal{X}}
 \newcommand\cZ{\mathcal{Z}}
\renewcommand\AA{\mathbb{A}} 
\newcommand\CC{\mathbb{C}}
 \newcommand\PP{\mathbb{P}}
\newcommand\QQ{\mathbb{Q}} \newcommand\RR{\mathbb{R}}
 \newcommand\ZZ{\mathbb{Z}}
\newcommand\bC{\mathbf{C}} \newcommand\bD{\mathbf{D}}
\newcommand\bI{\mathbf{I}}
\newcommand\bO{\mathbf{O}} 
 \newcommand\bT{\mathbf{T}}
\newcommand\rI{\mathrm{I}}
 \newcommand\rR{\mathrm{R}}
 \newcommand\bfx{\mathbf{x}}
\newcommand\arr{\ifinner\to\else\longrightarrow\fi}
\newcommand\arrto{\ifinner\mapsto\else\longmapsto\fi}
\def\displaytimes_#1{\mathrel{\mathop{\times}\limits_{#1}}}
\def\displayotimes_#1{\mathrel{\mathop{\bigotimes}\limits_{#1}}}
\newcommand\aut{\operatorname{Aut}}
\newcommand\spec{\operatorname{Spec}}
\newcommand{\proj}{\operatorname{Proj}}
\newlength{\ignora}
\renewcommand{\setminus}{\smallsetminus}
\newcommand{\GL}{\mathrm{GL}}
\newcommand{\SL}{\mathrm{SL}}
\newcommand{\PGL}{\mathrm{PGL}}
\newcommand{\br}{\operatorname{Br}}
\DeclareFontFamily{U}{mathx}{\hyphenchar\font45}
\DeclareFontShape{U}{mathx}{m}{n}{
      <5> <6> <7> <8> <9> <10>
      <10.95> <12> <14.4> <17.28> <20.74> <24.88>
      mathx10
      }{}
\DeclareSymbolFont{mathx}{U}{mathx}{m}{n}
\DeclareMathAccent{\widecheck}{0}{mathx}{"71}
\DeclareMathAccent{\wideparen}{0}{mathx}{"75}
\renewcommand{\epsilon}{\varepsilon}
\newcommand{\cha}{\operatorname{char}}
\newcommand{\ST}{\operatorname{ST}}
\newcommand{\thickslash}{\mathbin{\!\!\pmb{\fatslash}}}
\begin{document}

\title{The arithmetic of tame quotient singularities in dimension $2$}

\author{Giulio Bresciani}

\begin{abstract}
    Let $k$ be a field, $X$ a variety with tame quotient singularities and $\tilde{X}\to X$ a resolution of singularities. Any smooth rational point $x\in X(k)$ lifts to $\tilde{X}$ by the Lang-Nishimura theorem, but if $x$ is singular this might be false. 
    
    For certain types of singularities the rational point is guaranteed to lift, though; these are called singularities of type $\mathrm{R}$. This concept has applications in the study of the fields of moduli of varieties and yields an enhanced version of the Lang-Nishimura theorem where the smoothness assumption is relaxed.

    We classify completely the tame quotient singularities of type $\mathrm{R}$ in dimension $2$; in particular, we show that every non-cyclic tame quotient singularity in dimension $2$ is of type $\mathrm{R}$, and most cyclic singularities are of type $\mathrm{R}$ too.
\end{abstract}

\address{Scuola Normale Superiore\\Piazza dei Cavalieri 7\\
56126 Pisa\\ Italy}
\email{giulio.bresciani@gmail.com}


\maketitle

We work over a field $k$ with algebraic closure $\bar{k}$. A variety over $k$ is a geometrically integral, separated scheme of finite type over $k$. A variety over $k$ has tame quotient singularities if, étale locally, it is a quotient of a smooth variety by the action of a finite group of order prime to $\cha k$. Varieties with tame quotient singularities always have a resolution \cite[\S 6.1]{giulio-angelo-moduli}. A tame quotient singularity over $k$ is a pair $(S,s)$ where $S$ is a variety with tame quotient singularities and $s\in S(k)$ is a rational point. In this paper, we only deal with singularities which are quotient and tame, hence we often simply call $(S,s)$ a singularity. 

Let $\tilde{S}\to S$ be a resolution of singularities. If $s$ lifts to a rational point of $\tilde{S}$, we say that the singularity $(S,s)$ is \emph{liftable}. By the Lang-Nishimura theorem, this definition does not depend on the chosen resolution. 

\begin{question}\label{question}
	Can we find \emph{geometric} conditions on $(S,s)$, i.e. only depending on $(S_{\bar{k}},s_{\bar{k}})$, which force the singularity to be liftable?
\end{question}

We started investigating Question~\ref{question} in our recent joint paper with A. Vistoli \cite{giulio-angelo-moduli}, where we have shown that it has strong connections with the classical problem of whether a variety (possibly with additional structure, such as a marked point or a polarization) is defined over its field of moduli. 

For instance, we have re-proved Shimura's classical result \cite{shimura} that a generic principally polarized abelian variety of odd dimension is defined over its field of moduli using the fact that twisted forms of the singularity $(\AA^{d}/\pm1,[0])$ are always liftable for $d$ odd \cite[Corollary 6.25]{giulio-angelo-moduli}.

Let us recall a definition from \cite{giulio-angelo-moduli}. A singularity $(S,s)$ over $k$ is of type $\rR$ if, for every extension $K/k$, every subfield $k'\subset K$ and every singularity $(U,u)$ over $k'$ such that $(S_{K},s_{K})$, $(U_{K},u_{K})$ have a common étale neighbourhood, then $(U,u)$ is liftable over $k'$. In short, a singularity is of type $\rR$ if any twisted form over any field is liftable. Question~\ref{question} asks to characterize geometrically the singularities of type $\rR$.

\subsection*{The case for singularities of type $\rR$}

As we mentioned above, singularities of type $\rR$ are tightly connected with the study of fields of moduli. The following is a direct consequence of our joint work with Vistoli \cite[Theorem 5.4, Lemma 5.9]{giulio-angelo-moduli}.

\begin{theorem}[{Bresciani--Vistoli \cite{giulio-angelo-moduli}}]\label{thm:point}
	Let $X$ be a proper variety over $\bar{k}$ and $p\in X$ a smooth point. Assume that $\aut(X,p)$ is finite étale and tame. If the singularity $(X/\aut(X,p),[p])$ is of type $\rR$, then $(X,p)$ is defined over its field of moduli.
\end{theorem}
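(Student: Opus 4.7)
My plan is to reduce to the liftability of a canonical descent of $(X/G,[p])$, where $G=\aut(X,p)$, to the field of moduli, and then invoke the dictionary from \cite{giulio-angelo-moduli} between such liftings and $k_0$-models of the pair $(X,p)$.

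First I would observe that, writing $k_0$ for the field of moduli, the pair $(X/G,[p])$ descends canonically to a singularity $(Y,[q])$ over $k_0$. By definition of the field of moduli, for each $\sigma\in\operatorname{Gal}(\bar{k}/k_0)$ there is an isomorphism $\phi_\sigma\colon X\to X^\sigma$ sending $p$ to $p^\sigma$, and any two such isomorphisms differ by a unique element of $G$. Consequently, the induced map $X/G\to X^\sigma/G^\sigma$ fixing the marked point is canonical, which makes the resulting descent datum on $(X/G,[p])$ effective over $k_0$; the hypothesis that $G$ is finite étale and tame is what ensures $(Y,[q])$ remains a tame quotient singularity.

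Next I would apply the type $\rR$ hypothesis to $(X/G,[p])$, taking $K=\bar{k}$, $k'=k_0$, and $(U,u)=(Y,[q])$ in the definition. Since $(Y_{\bar{k}},[q]_{\bar{k}})$ is isomorphic to $(X/G,[p])$ by construction, they share an étale neighbourhood trivially. Type $\rR$ then forces $(Y,[q])$ to be liftable over $k_0$: some resolution $\tilde{Y}\to Y$ carries a $k_0$-rational point $\tilde{q}$ above $[q]$.

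Finally I would feed this $k_0$-rational point into the machinery of \cite[Theorem 5.4, Lemma 5.9]{giulio-angelo-moduli}, which provides precisely the translation from a smooth rational point on a resolution of the quotient $X/\aut(X,p)$ to a $k_0$-form of the pointed variety $(X,p)$. The combination yields the desired model of $(X,p)$ over the field of moduli. The substantive step is the last one: the first two are essentially formal consequences of the definitions of field of moduli and type $\rR$, while the real content lies in the moduli-theoretic dictionary of \cite{giulio-angelo-moduli} that converts the smooth rational point $\tilde{q}$ into an actual descent of $(X,p)$ to $k_0$.
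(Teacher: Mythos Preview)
Your proposal is correct and follows the same route as the paper, which does not give an independent argument but simply records the theorem as a direct consequence of \cite[Theorem~5.4, Lemma~5.9]{giulio-angelo-moduli}. Your sketch is a reasonable unpacking of what those cited results supply: the canonical descent of the quotient $(X/G,[p])$ to the field of moduli, followed by the dictionary translating liftability of that descent into a model of $(X,p)$.
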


Applying Theorem~\ref{thm:point} to a generic, odd-dimensional principally polarized abelian variety yields a more conceptual proof of Shimura's result \cite{shimura}. Varieties with marked points are just an example of the applications of $\rR$-singularities to the study of fields of moduli, see \cite{giulio-fmod} for others.

Recall that the classical Lang--Nishimura theorem states that, if $X\dashrightarrow Y$ is a rational map of varieties over a field $k$, $p\in X(k)$ is a smooth rational point and $Y$ is proper, then $Y(k)$ is non-empty. If $p$ is not smooth, then this is false: for instance, the hypersurface $X\subset \PP^{3}_{\QQ}=\proj \QQ[x,y,z,w]$ defined by $x^{2}+y^{2}+z^{2}=0$ has a unique rational point $p=(0:0:0:1)$, but the blowup of $X$ in $p$ has no rational points.

The following enhanced version of Lang--Nishimura is a direct consequence of the definition of $\rR$-singularities.

\begin{theorem}[{Lang--Nishimura, enhanced}]\label{thm:LN+}
	Let $X\dashrightarrow Y$ be a rational map of varieties over a field $k$ with $Y$ proper. Suppose that $p\in X(k)$ is a rational point and that $(X,p)$ is of type $\rR$. Then $Y(k)$ is non-empty.
\end{theorem}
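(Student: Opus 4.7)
The plan is to use the type $\rR$ hypothesis merely to reduce to the classical Lang--Nishimura theorem, by passing to a resolution of singularities. The definition of type $\rR$ is more elaborate than what is needed here: it requires liftability of all twisted forms of $(X,p)$ over all field extensions and subfields. But by specializing the definition to the trivial case $K=k'=k$ and $(U,u)=(X,p)$—the common étale-neighbourhood condition then becomes tautological—the hypothesis immediately yields that $(X,p)$ itself is liftable over $k$. Hence there exist a resolution of singularities $\pi\colon \tilde X \to X$ and a rational point $\tilde p \in \tilde X(k)$ with $\pi(\tilde p)=p$.

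Next, precompose the given rational map $X\dashrightarrow Y$ with $\pi$ to obtain a rational map $f\colon \tilde X \dashrightarrow Y$. Since $\tilde X$ is smooth, $\tilde p$ is a smooth rational point of $\tilde X$, and since $Y$ is proper, the classical Lang--Nishimura theorem applied to $f$ at $\tilde p$ produces a rational point of $Y(k)$, as desired. Note that this last step does not require $\tilde p$ to lie in the domain of definition of $f$, which is why we can freely replace $X$ by $\tilde X$.

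There is no real obstacle: the theorem is a formal corollary of the definition combined with the classical Lang--Nishimura theorem. The point of introducing the notion of type $\rR$ is precisely to isolate the geometric property needed to extend Lang--Nishimura from smooth rational points to singular ones, and Theorem~\ref{thm:LN+} is the immediate payoff of that definition.
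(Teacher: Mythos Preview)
Your proof is correct and matches the paper's approach: the paper does not give a separate proof but simply remarks that the theorem is ``a direct consequence of the definition of $\rR$-singularities,'' and you have spelled out exactly that deduction. The specialization $K=k'=k$, $(U,u)=(X,p)$ to extract liftability, followed by the classical Lang--Nishimura on the resolution, is precisely what is intended.
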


Furthermore, the Lang-Nishimura theorem holds for tame stacks (in characteristic $0$, these are just Deligne--Mumford stacks) \cite[Theorem 4.1]{giulio-angelo-valuative}. Using it, it is immediate to prove the following.

\begin{theorem}[{Bresciani--Vistoli \cite{giulio-angelo-valuative}}]\label{thm:moduli}
	Let $\cX$ be an integral, regular tame stack of finite type over $k$ with coarse moduli space $X$, suppose that $\cX$ is generically a scheme, e.g. the moduli stack of curves of genus $g\ge 3$ in characteristic $0$. Let $p\in X(k)$ be a rational point. If the singularity $(X,p)$ is of type $\rR$, then $p$ lifts to a rational point of $\cX$.
\end{theorem}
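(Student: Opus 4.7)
The plan is to exploit the fact that, since $\cX$ is integral and generically a scheme, the coarse moduli map $\pi\colon \cX \to X$ (which is proper by Keel--Mori) is an isomorphism over a dense open $U \subset X$. This provides a section $\sigma\colon U \to \cX$ of $\pi$, i.e.\ a rational map $X \dashrightarrow \cX$. Picking any resolution $\rho\colon \tilde X \to X$ and composing, I obtain a rational map $\phi\colon \tilde X \dashrightarrow \cX$ over $X$.

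Next I would use the type $\rR$ hypothesis in the most tautological way possible: taking $K=k'=k$ and $(U,u)=(X,p)$ in the definition, the singularity $(X,p)$ is itself liftable, so $p$ lifts to a rational point $\tilde p \in \tilde X(k)$. Because $\tilde X$ is smooth, $\tilde p$ is automatically a smooth $k$-point.

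Finally I would feed $\phi$ and $\tilde p$ into the Lang--Nishimura theorem for tame stacks \cite[Theorem 4.1]{giulio-angelo-valuative}. Since $\pi$ is proper, any lift of $\tilde p$ to $\cX$ produced this way automatically maps to $p$ in $X$, because $\pi\circ\phi = \rho$ wherever $\phi$ is defined. Concretely, one can pick a smooth curve germ $C\subset\tilde X$ through $\tilde p$ that is not contained in the indeterminacy locus of $\phi$; then $\phi|_C$ is a rational map from a smooth curve to $\cX$, which extends (by the valuative criterion for proper tame DM stacks, i.e.\ the content of \cite[Theorem 4.1]{giulio-angelo-valuative}) to a morphism $C \to \cX$. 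Evaluating at $\tilde p$ produces the desired point of $\cX(k)$ over $p$.

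The only subtlety I anticipate is that Lang--Nishimura for tame stacks is typically phrased with the target stack proper over $k$, whereas here $\cX$ is proper only relatively over $X$. This is purely cosmetic: the statement may be applied to the base change $\cX\times_X \spec \cO_{\tilde X,\tilde p}^h$, or—equivalently—one invokes it along the curve germ $C$ just described, where properness of $\pi$ suffices to run the valuative criterion.
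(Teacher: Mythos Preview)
Your proposal is correct and follows essentially the same approach as the paper. The paper does not spell out a proof at all: it merely says that Theorem~\ref{thm:moduli} is ``immediate'' from the Lang--Nishimura theorem for tame stacks \cite[Theorem~4.1]{giulio-angelo-valuative}, and your argument is precisely the unpacking of that word---construct the birational section $\tilde X\dashrightarrow\cX$, use the $\rR$ hypothesis tautologically to produce a smooth $k$-point $\tilde p$ on the resolution, and then invoke the stacky Lang--Nishimura/valuative criterion along a curve germ, using properness of $\pi$ rather than of $\cX$; your observation that the relevant input from \cite{giulio-angelo-valuative} is really a valuative criterion for \emph{proper maps} of tame stacks (so relative properness of $\pi\colon\cX\to X$ is exactly what is needed) is on the mark.
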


The more singularities of type $\rR$ we find, the stronger Theorems~\ref{thm:point}, \ref{thm:LN+} and \ref{thm:moduli} become. Singularities of type $\rR$ are more common than one might expect, and we will show that most tame quotient singularities in dimension $2$ are of type $\rR$.

J.-L. Colliot-Thélène pointed to us that, when the base field is $\RR$, liftable points are called \emph{central points} and play a role in real algebraic geometry, e.g. a version of the Nullstellensatz over $\RR$ naturally involves them \cite[Corollary 7.6.6]{bochnak-coste-roy}. If $X$ is a variety of dimension $d$ over $\RR$ with smooth locus $X^{\rm sm}$ and $x\in X(\RR)$ is a real point, then $x$ is liftable, or central, if and only if it is in the closure of $X^{\rm sm}(\RR)$, if and only if the local dimension of $X(\RR)$ in $x$ is $d$ \cite[Proposition 7.6.2]{bochnak-coste-roy}. Studying singularities of type $\rR$ thus gives a way of finding central points on a real variety.

\subsection*{Known and new results}

A variety with quotient singularities is normal, hence in dimension $1$ every tame quotient singularity is regular and of type $\rR$. As we have mentioned above, in \cite{giulio-angelo-moduli} we have proved that $(\AA^{d}/\pm1,[0])$ is of type $\rR$ for $d$ odd. Furthermore, we have proved that if $G\subset\GL_{d}$ is a finite subgroup of order prime with $d!\cdot\cha k$, then $(\AA^{d}/G,[0])$ is of type $\rR$. Here, we classify completely the tame quotient singularities of type $\rR$ in dimension $2$. 

Let $K$ be a separable closure of $k$ and $1\le d\le n$ integers with $\gcd(n,d)=\gcd(n,\cha K)=1$, write $\zeta_{n}\in K$ for a primitive root of the unity and let $M$ be a diagonal $2\times 2$ matrix with eigenvalues $\zeta_{n},\zeta_{n}^{d}$. A $2$-dimensional singularity $(S,s)$ over $k$ is of type $\frac1n(1,d)$ if $(S_{K},s_{K})$ and $(\AA^{2}/<M>,[0])$ have a common étale neighbourhood. A tame quotient singularity is cyclic if, étale locally, it is the quotient of a smooth variety by a cyclic group of order prime with $\cha k$; every $2$-dimensional cyclic singularity is of type $\frac 1n(1,d)$ for some $n,d$ \cite[Corollary 6.4]{giulio-angelo-moduli}.

\begin{theorem}\label{thm:R2sing}
	Let $(S,s)$ be a tame quotient singularity over a separably closed field $K$. The following are equivalent.
	\begin{itemize}
			\item $(S,s)$ is not of type $\rR$.
			\item $(S,s)$ is of type $\frac1n(1,d)$, where $1\le d\le n$ are positive integers such that $n$ is even, $\cha k$ does not divide $n$ and, for every prime power $\ell^{a}$ dividing $n$, $d$ is congruent either to $1$ or $-1$ modulo $\ell^{a}$.
	\end{itemize}
\end{theorem}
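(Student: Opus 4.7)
The plan is to carry out a case analysis based on the classification of small finite subgroups $G\subset\GL_2(K)$ of order prime to $\cha K$: étale-locally $(S,s)\simeq(\AA^2/G,[0])$, the $k'$-twists of $(S,s)$ for $k'\subset K$ are classified by $\H^1(k',N_{\GL_2}(G)/G)$, and the resolution of a twist is obtained by twisting the minimal resolution. The cases I would treat separately are $G$ non-cyclic, $G$ cyclic of odd order, and $G$ cyclic of even order.

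For $G$ non-cyclic---the binary dihedral, tetrahedral, octahedral, or icosahedral types, possibly with a central scalar extension---the dual graph of the minimal resolution is a tree with a unique vertex $v_0$ of valence $3$ that is fixed by every graph automorphism. For any twist, the component $E_{v_0}\simeq\PP^1_{\bar k}$ is therefore Galois-invariant and descends to a conic $C$ over $k'$; its three intersection points with the neighbouring components form a Galois-invariant effective divisor of odd degree $3$ on $C$, so the index of $C$ is $1$, hence $C\simeq\PP^1_{k'}$ and the resolution acquires a rational point above $s$.

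For cyclic $G$ of odd order $n$, I would invoke the result of \cite{giulio-angelo-moduli} that $(\AA^d/G,0)$ is of type $\rR$ whenever $|G|$ is coprime to $d!\cdot\cha K$; with $d=2$ this covers every $n$ odd. For cyclic $G$ of even order $n$, the minimal Hirzebruch--Jung resolution is a chain $E_1,\dots,E_r$ of $\PP^1$s whose self-intersections are the coefficients of the Hirzebruch--Jung continued fraction of $n/d$. The action of $\aut(S,s)$ on the dual graph factors through the chain's automorphism group, which is trivial if $d^2\not\equiv 1\pmod n$ and generated by the chain-reversal otherwise. A twist trivial on the dual graph fixes every intersection node and is immediately liftable; a twist acting via the reversal either fixes the central node (when $r$ is even), again giving a rational point, or stabilises only the middle component $E_{(r+1)/2}$ as a set (when $r$ is odd). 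In this last subcase a detailed toric analysis of the characters by which the automorphism torus acts on the exceptional divisor is expected to show that the two fixed points of the reversal on $E_{(r+1)/2}$ remain $k'$-rational in the twisted component precisely when $d\not\equiv\pm1\pmod{\ell^a}$ for some prime power $\ell^a\mid n$, thereby handling the ``good'' even case.

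Finally, in the ``bad'' case where $n$ is even and $d\equiv\pm1\pmod{\ell^a}$ for every prime power $\ell^a\mid n$, I would construct an explicit non-liftable twist: over any field $k'$ admitting a non-trivial element of $k'^*/k'^{*2}$ (for example $\mathbb{R}$), combining the chain-reversal with an appropriate element of the automorphism torus produces a cocycle whose twist has middle exceptional component a pointless Brauer--Severi conic, while the remaining components of the exceptional divisor are paired by the reversal and so are not individually defined over $k'$; the resolution therefore has no $k'$-rational point above $s$, so the singularity is not of type $\rR$. The main technical obstacle will be the toric-combinatorial analysis in the odd-$r$ subcase, identifying the character by which the automorphism torus acts on $E_{(r+1)/2}$ and matching its non-triviality with the arithmetic condition on $d$ in the theorem.
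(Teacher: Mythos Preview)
Your outline is broadly sound for the non-cyclic case and for the explicit construction of non-liftable twists; the argument via the trivalent vertex is exactly the alternative proof the paper records (attributed to Vistoli), and your construction in the bad case is the right idea, carried out in the paper by an explicit ring-of-invariants computation.

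The gap is in the odd-chain subcase of the cyclic analysis. Your proposed mechanism, tracking whether the two fixed points of the reversal on the middle curve $E_{(r+1)/2}$ stay $k'$-rational, is not the right invariant: after twisting by a cocycle that mixes the reversal with the torus, those two points may well become a conjugate pair while the conic still has other rational points, or conversely. What actually decides the question is much simpler: the middle component descends to a genus-$0$ curve $C$ over $k'$ whose self-intersection number $-a_{(r+1)/2}$ is Galois-invariant, so the normal bundle furnishes a divisor of degree $a_{(r+1)/2}$ on $C$. If this integer is odd, $C$ has odd index and is forced to be $\PP^1_{k'}$, and the singularity is liftable regardless of the twist. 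Thus the obstruction lives entirely in the parity of the central continued-fraction coefficient, not in a character computation.

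Once you have this, the remaining (and genuinely hard) step is purely combinatorial: show that the Hirzebruch--Jung expansion of $n/d$ is symmetric of odd length with even central term exactly when $n$ is even and $d\equiv\pm1$ modulo every prime power dividing $n$. Your proposal does not offer a strategy here, and a direct toric analysis is unlikely to yield the prime-by-prime condition cleanly. The paper's device is to compare the central self-intersections of $\tilde X_{n,d}$ and $\tilde X_{n',d'}$ (with $n=mn'$, $d\equiv d'\bmod n'$) via an equivariant resolution of the natural map between them: when $m$ is odd, a projection-formula parity argument shows the two central terms have the same parity. This reduces the question to $n=2^a$, where the four square roots of $1$ are $\pm1,\,2^{a-1}\pm1$ and one checks the four continued fractions by hand. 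You should replace the vague ``toric analysis'' with this self-intersection-parity reduction; without it the proof does not close.
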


In short, every non-cyclic singularity and most cyclic singularities are of type $\rR$ in dimension $2$.

In \cite{giulio-p2} \cite{giulio-points}, we apply Theorem~\ref{thm:R2sing} to study fields of moduli of curves and finite subsets in $\PP^{2}$.

\begin{remark}
	Quotient singularities are Kawamata log-terminal, and the converse holds in dimension $2$ and characteristic $0$ \cite[Remark 0-2-17]{kmm}. Hence, in characteristic $0$ we are working with Kawamata log-terminal singularities.
	
	As a consequence of Theorem~\ref{thm:R2sing} it is possible to find canonical singularities which are not of type $\rR$, namely $\frac1n(1,n-1)$ for $n$ even and prime with $\cha k$. More precisely, if $n$ is even and prime with $\cha k$, and there exists a Brauer class of index $2$ over $k$, then there exists a non-liftable twisted form of $\frac 1n(1,n-1)$, see Lemma~\ref{lem:non-lift}.
\end{remark}

\subsection*{Groups of type $\rR_{2}$}

Let $p\ge 0$ be either a prime or $0$, and $d\ge 1$ a positive integer. Let $G$ be a finite group, assume that $p$ does not divide the order of $G$. The group $G$ is of type $\rR_{d}$ in characteristic $p$ \cite[Definition 6.12]{giulio-angelo-moduli} if, for every faithful $d$-dimensional representation $G\subset\GL_{d}$ in characteristic $p$, the singularity $(\AA^{d}/G,[0])$ is of type $\rR$.

Clearly, if there are no faithful $d$-dimensional representations of $G$, then $G$ is of type $\rR_{d}$, the interesting cases are those in which a faithful representation exists. Using this definition, in order to check the hypotheses of Theorems~\ref{thm:point}, \ref{thm:LN+} and \ref{thm:moduli} it is enough to check that some group is of type $\rR_{d}$ where $d$ is the dimension of $X$ in $p$, namely $\aut(X,p)$ in Theorem~\ref{thm:point}, the local fundamental group of $X$ in $p$ in Theorem~\ref{thm:LN+} and the automorphism group of a geometric point over $p$ in Theorem~\ref{thm:moduli}. 

In \cite[Theorems 6.18, 6.19]{giulio-angelo-moduli} we have shown that there are infinitely many finite groups which are $\rR_{d}$ for every $d$, and that if $G$ is a group of order prime with $d!$, then $G$ is of type $\rR_{d}$. 

Since we know which $2$-dimensional singularities are of type $\rR$, then it is in principle possible to classify all groups of type $\rR_{2}$ in characteristic $p\ge 0$. For abelian groups, we prove that the only ones not of type $\rR_{2}$ are those of the form $C_{a}\times C_{2ab}$, where $C_{n}$ is the cyclic group of order $n$, see Proposition~\ref{r2-ab}. For non-abelian groups, we have a complete classification in characteristic $0$. The classification is fairly complex, it is summarized in a table at the end of the paper.

\subsection*{Acknowledgements}

This paper was born as part of my recent joint articles with A. Vistoli \cite{giulio-angelo-valuative} and \cite{giulio-angelo-moduli}. I am grateful to him for many useful discussions, and for an alternative proof of Proposition~\ref{prop:cyclic} in characteristic $0$, see Remark~\ref{rem:cyclic}. I thank J.-L. Colliot-Thélène for pointing out to me the concept of central points in real algebraic geometry, J. Koll\'ar for his interest in my work, C. Liedtke for suggesting how to generalize an argument to positive characteristic (see Remark~\ref{rem:cyclic}) and an anonymous referee for many useful comments.

\section{\texorpdfstring{$\rR$}{R}-singularities in dimension 2}

In this section we prove Theorem~\ref{thm:R2sing}. We refer to \cite[\S 6]{giulio-angelo-moduli} for the definitions of the fundamental gerbe and associated variety of a singularity $(S,s)$ \cite[Definition 6.2, Definition 6.8]{giulio-angelo-moduli}. The fundamental group of $(S,s)$ is the automorphism group of any geometric point of the fundamental gerbe of $(S,s)$, and it coincides with the local fundamental group of $S_{K}$ in $s_{K}$. 

\begin{proposition}\label{prop:cyclic}
	Let $(S,s)$ be a tame quotient singularity in dimension $2$ over $k$. If $(S,s)$ is not liftable, then its fundamental group is cyclic.
\end{proposition}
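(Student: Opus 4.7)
The plan is to prove the contrapositive: if the fundamental group $G$ of $(S,s)$ is non\dash cyclic, then $(S,s)$ is liftable. Let $\pi\colon \tilde{S}\to S$ denote the minimal resolution of singularities; in dimension $2$ the minimal resolution is canonical, hence defined over $k$, and by the Lang\dash Nishimura theorem liftability is independent of the chosen resolution, so this is a legitimate choice. Write $E\subset \tilde{S}_{\bar k}$ for the exceptional divisor over $s_{\bar k}$. Because $(S,s)$ is a tame quotient singularity, $E$ is a connected tree of smooth rational curves whose weighted dual graph is determined by $G$.

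The geometric input I would use, coming from Brieskorn's classification of minimal resolutions of quotient surface singularities, is the dichotomy that the dual graph of $E$ is a chain exactly when $G$ is cyclic, and is otherwise star\dash shaped, with a unique vertex of degree $3$ from which three chains of rational curves emanate. Assuming $G$ non\dash cyclic, I single out the component $E_0\subset E$ corresponding to this unique trivalent vertex. Since the trivalent vertex is combinatorially distinguished, $\operatorname{Gal}(\bar k/k)$ must fix it, so $E_0$ descends to a smooth projective $k$\dash curve which is a $k$\dash form of $\PP^1$.

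Next, let $p_1,p_2,p_3\in E_0(\bar k)$ be the three distinct points where $E_0$ meets its neighbours in $E$. The Galois action permutes $\{p_1,p_2,p_3\}$, so $D=p_1+p_2+p_3$ is a $k$\dash rational effective divisor of degree $3$ on $E_0$. A $k$\dash form of $\PP^1$ carrying a $k$\dash rational divisor of odd degree must be $\PP^1_k$ itself: its class in $\br(k)$ is $2$\dash torsion (as a Brauer\dash Severi curve) and is also killed by the odd integer $\deg D$ via the norm, hence is trivial; equivalently, this is Springer's odd\dash degree descent for conics. Therefore $E_0(k)\neq \emptyset$, and any such rational point of $E_0$ is a $k$\dash rational point of $\tilde{S}$ lying over $s$, showing that $(S,s)$ is liftable.

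The main obstacle I expect is establishing the chain\slash star\dash shaped dichotomy uniformly across the list of non\dash cyclic tame quotient surface singularities. For Du Val singularities this is the classical ADE picture, where the non\dash cyclic types $D_n$, $E_6$, $E_7$, $E_8$ all produce star graphs with a single trivalent vertex; for the non\dash Du Val non\dash cyclic families one must appeal to Brieskorn's explicit description of the resolution graphs of small finite subgroups of $\GL_2$, which are again star\dash shaped with exactly one branch vertex of degree $3$. Once this structural statement is in hand, the Galois descent and Springer's theorem close the argument in a few lines.
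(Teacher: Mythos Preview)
Your argument is correct and essentially reproduces the alternative proof the paper records in Remark~\ref{rem:cyclic}, which it attributes to Vistoli: descend the minimal resolution, observe via Brieskorn's classification that in the non\dash cyclic case there is a unique trivalent component, and conclude that this component is a conic with a rational divisor of degree~$3$, hence $\PP^{1}$.

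The paper's \emph{main} proof, however, takes a genuinely different route. Rather than analyzing the resolution graph, it works with the fundamental gerbe $\cG$ of the singularity and the associated genus~$0$ curve $E$. It reduces to showing that $\bar G=G/Z(G)$ is cyclic (since $G\subset\GL_{2}$ without pseudoreflections is then cyclic of rank~$1$), constructs a quotient gerbe $\bar\cG$ of $\cG$ banded by $\bar G$, and shows that the geometric fibres of $E\dashrightarrow\bar\cG$ are connected of genus~$0$. When $(S,s)$ is not liftable, $E(k)=\emptyset$, and then a general result on rational maps from pointless conics to gerbes forces $\bar G$ cyclic. This approach avoids the case analysis of resolution graphs entirely and is characteristic\dash free from the outset. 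Your approach is more elementary and concrete, but you should be explicit that Brieskorn's classification is over $\CC$; to cover positive characteristic one needs an additional reduction step, for which the paper cites a lifting result of Liedtke.
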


\begin{proof}
	Let $G$ be the fundamental group of $(S,s)$, we want to prove that $G$ is cyclic. Let $\bar{G}$ be the quotient of $G$ by its center, it is enough to prove that $\bar{G}$ is cyclic. In fact, if $\bar{G}$ is cyclic then $G$ is abelian, and since by \cite[Corollary 6.4]{giulio-angelo-moduli} there is an embedding $G\subset \GL_{2}(\bar{k})$ with no pseudoreflections then $G$ has rank $1$. So we want to show that $\bar{G}$ is cyclic.
	
	Let $\cG$ be the fundamental gerbe of the singularity, consider the inertia stack $\rI_{\cG}\to\cG$, it is a relative group over $\cG$. Let $\cZ\subset\rI_{\cG}$ be the center and define $\bar{\cG}$ as the rigidification $\rI_{\cG}\thickslash\cZ$ (see \cite[Appendix C]{dan-tom-angelo2008}), we have that $\bar{\cG}$ is a quotient gerbe of $\cG$ and the automorphism group of any geometric point of $\bar{\cG}$ is isomorphic to $\bar{G}$.
	
	 Let $E$ be the associated variety of the singularity, it is a smooth projective curve of genus $0$ and there is a rational map $E\dashrightarrow\cG$ \cite[Corollary 6.9]{giulio-angelo-moduli}. I claim that the geometric fibers of the composite $E\dashrightarrow\cG\to\bar{\cG}$ are connected curves of genus $0$. In fact, let $\bar{G}'$ be the image of $G\subset\GL_{2}(\bar{k})$ in $\PGL_{2}(\bar{k})$, then there is a factorization $G\to\bar{G}'\to\bar{G}$ and the composite $E_{\bar{k}}\simeq\PP^{1}_{\bar{k}}/\bar{G}'\dashrightarrow \cG_{\bar{k}}=\cB_{\bar{k}}G\to\cB_{\bar{k}}\bar{G}'$ corresponds to the natural covering $\PP^{1}_{\bar{k}}\to\PP^{1}_{\bar{k}}/\bar{G}'$, see \cite[\S 6.4]{giulio-angelo-moduli}. In particular, the fibers of $E_{\bar{k}}\simeq\PP^{1}_{\bar{k}}/\bar{G}'\dashrightarrow\bar{\cG}$ are dominated by $\PP^{1}_{\bar{k}}$, hence they have genus $0$.
	
	If $(S,s)$ is not liftable, then $E$ has no rational points \cite[Proposition 6.5, Proposition 6.10]{giulio-angelo-moduli}. By \cite[Proposition 2]{giulio-divisor} applied to $E\dashrightarrow\bar{\cG}$ we get that $\bar{G}$ is cyclic.
\end{proof}

\begin{remark}\label{rem:cyclic}
	The first proof of Proposition~\ref{prop:cyclic} was found by A. Vistoli, let us sketch it. If $(S,s)$ is a singularity, by descent theory the minimal resolution of $(S_{\bar{k}},s_{\bar{k}})$ descends to $k$.
	
	If $(S_{\bar{k}},s_{\bar{k}})$ is not cyclic and the characteristic is $0$, then by Brieskorn's classification \cite{brieskorn} the minimal resolution has a unique component which is a rational curve intersecting three other components. C. Liedtke pointed us to the fact that this is still true in positive characteristic: in fact, one can reduce to characteristic $0$ using \cite[Theorem 4.10]{liedtke}. Because of this, said component descends to a geometrically irreducible component of the minimal resolution of $S$ which is thus a regular, projective curve of genus $0$ with a divisor of degree $3$, i.e. $\PP^{1}$.
\end{remark}

Thanks to Proposition~\ref{prop:cyclic}, we can focus on cyclic singularities. Let $1\le d\le n$ be integers, assume that $n$ is prime with $d$ and $\cha k$.

It is well-known that the singularity $\frac 1n(1,d)$ over a separably closed field has a minimal resolution whose exceptional divisor is a string of rational curves with autointersections $(-a_{0},\dots,-a_{r})$, where the integers $a_{0},\dots,a_{r}$ are given by the Hirzebruch-Jung expansion

\[
\frac{n}{d} = a_{0} - \cfrac{1}{a_{1} - \cfrac{1}{a_{2} - \cfrac{1}{\cdots}
}}
\]

For historical reasons, references usually work over $\CC$ e.g. \cite{reid}. Still, it is clear that the argument carries over to any separably closed base field. 

\begin{proposition}\label{prop:cycR}
	Let $n,d$ be as above, and let $(a_{0},\dots,a_{r})$ be the Hirzebruch-Jung expansion of $n/d$. Let $p\nmid n$ be either a prime or $p=0$. If $\frac 1n(1,d)$ is not of type $\rR$ in characteristic $p$, then $a_{i}=a_{r-i}$ for every $i$, $r$ is even and $a_{r/2}$ is even.
\end{proposition}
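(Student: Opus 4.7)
The plan is to assume there exists a non-liftable twisted form $(S, s)$ of $\frac{1}{n}(1,d)$ over some field $k$ of characteristic $p$, and then deduce each of the three conditions from the Galois action on the minimal resolution. By the descent statement recalled in Remark~\ref{rem:cyclic}, the minimal resolution $\tilde{S} \to S$ exists over $k$; after base change to $\bar{k}$, its exceptional fiber is the Hirzebruch--Jung chain $E_0, \ldots, E_r$ of rational curves with self-intersections $-a_0, \ldots, -a_r$. The absolute Galois group $\Gamma = \operatorname{Gal}(\bar{k}/k)$ permutes the $E_i$ while preserving adjacencies and self-intersections, so it must factor through the automorphism group of the weighted chain. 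This group is trivial when $(a_i)$ is not palindromic, and is generated by the reflection $i \mapsto r-i$ otherwise; in either case $\Gamma$ acts through a group of order at most two.

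Next I would handle the cases that force a $\Gamma$-fixed intersection point, which would contradict non-liftability. If $(a_i)$ is not palindromic, every $E_i$ is $\Gamma$-stable, so every intersection point $E_i \cap E_{i+1}$ is $k$-rational and lifts $s$. If $(a_i)$ is palindromic but $r = 2m+1$ is odd, the reflection swaps $E_m$ with $E_{m+1}$ and hence fixes their unique intersection point, again a $k$-rational lift of $s$. Both cases contradict the assumption, so $(a_i)$ must be palindromic and $r$ must be even.

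For the parity of $a_{r/2}$, set $m = r/2$. The component $E_m$ is $\Gamma$-stable and descends to a smooth, geometrically integral genus-zero curve $C_m$ over $k$, i.e.\ a conic. The two intersection points on $E_m$ with $E_{m-1}$ and $E_{m+1}$ form a $\Gamma$-invariant $0$-cycle (whether $\Gamma$ fixes them pointwise or swaps them), giving a $k$-rational divisor class of degree $2$ on $C_m$. Adjunction applied to $E_m \subset \tilde{S}$ yields
\[
K_{\tilde{S}} \cdot E_m = -2 - E_m^{2} = a_m - 2,
\]
so $K_{\tilde{S}}$ restricts to a $k$-rational divisor class of degree $a_m - 2$ on $C_m$. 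If $a_m$ were odd, then $\gcd(2, a_m - 2) = 1$ would produce a $k$-rational divisor class of degree $1$ on the conic $C_m$, forcing $C_m \simeq \PP^1_k$ and hence providing a rational point of $\tilde{S}$ lifting $s$. This contradicts non-liftability, so $a_m$ must be even.

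The delicate step is the very first one: identifying $\Gamma$'s action with an action by automorphisms of the weighted chain. This rests on two standard facts that should be invoked explicitly, namely that the minimal resolution descends to $k$ and that intersection numbers on $\tilde{S}_{\bar{k}}$ are $\Gamma$-invariant; once these are granted, the remainder is a short deduction using adjunction on the descended middle component together with the fact that a conic with an odd-degree divisor class is split.
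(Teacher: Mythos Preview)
Your argument is correct and follows essentially the same route as the paper: descend the minimal resolution, use the Galois action on the weighted chain to force the palindrome condition and even length, then find an odd-degree divisor class on the descended central conic when $a_{r/2}$ is odd. The only difference is cosmetic: the paper reads off the odd-degree class directly from the self-intersection (i.e.\ the normal bundle $\cO_{\tilde{S}}(C)|_{C}$ has degree $-a_{r/2}$), whereas you route through adjunction and the canonical class; note that your appeal to the two neighbouring intersection points is both unnecessary (since $a_{m}-2$ is already odd) and breaks down when $r=0$, while the self-intersection or canonical-class argument alone handles that edge case cleanly.
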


\begin{proof}
	Let $(S,s)$ be a non-liftable singularity of type $\frac 1n(1,d)$ over a field $k$ with separable closure $K$, and let $(a_{0},\dots,a_{r})$ be the Hirzebruch-Jung expansion of $n/d$. The minimal resolution of the pullback to $K$ descends to $k$, denote it by $\tilde{S}\to S$ and let $E\subset\tilde{S}$ be the exceptional divisor, then $E_{K}$ is a string of $r+1$ rational curves with autointersections $-a_{0},\dots,-a_{r}$. Since $(S,s)$ is not liftable, then $E(k)=\emptyset$.
	
	If $a_{i}\neq a_{r-i}$ for some $i$, then the Galois action does not permute the components of $E_{K}$ and $r\neq 0$, hence any intersection point of two components of $E_{K}$ descends to a rational point of $E$, which is absurd. Assume that $a_{i}=a_{r-i}$ for every $i$. If $r$ is odd, then $E_{K}$ has a central point which is Galois invariant, hence $E(k)\neq\emptyset$. If $r$ is even and $a_{r/2}$ is odd, then the central component of $E_{K}$ descends to a genus $0$ curve $C\subset E$ with odd autointersection. In particular, $C$ has a divisor of odd degree, hence $C\simeq\PP^{1}$ and $E(k)\neq\emptyset$.
	
\end{proof}

\begin{definition}
	A pair of positive integers $(n,d)$ with $d<n$ and $\gcd(n,d)=1$ is a \emph{critical pair} if it satisfies the conclusion of Proposition~\ref{prop:cycR}: namely, the Hirzebruch-Jung expansion of $n/d$ is symmetric, of odd length and with even central term.
\end{definition}

Proposition~\ref{prop:cycR} tells us that we need to search for $\neg\rR$ singularities only among critical pairs. We are going to give a simple characterization of critical pairs, and then we'll show that a singularity of type $\frac1n(1,d)$ with $(n,d)$ critical is not of type $\rR$ in every characteristic $p\ge 0$, $p\nmid n$.

\subsection{Characterization of critical pairs}

In the following, $d,n$ will always be positive integers with $1\le d\le n$, $\gcd(n,d)=1$. All the results of this subsection are numerical facts about critical pairs. We only use geometry as a tool for computing properties of Hirzebruch-Jung expansions, hence we may work over $\CC$ for simplicity.

\begin{lemma}\label{lem:sym}
	The Hirzebruch-Jung expansion of $n/d$ is symmetric if and only if $d^{2}\cong 1\pmod{n}$.
\end{lemma}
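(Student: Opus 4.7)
The plan is to relate the reversal of the Hirzebruch--Jung expansion of $n/d$ to the HJ expansion of $n/d^{*}$, where $d^{*}\in\{1,\dots,n-1\}$ denotes the multiplicative inverse of $d$ modulo $n$. The key observation is that the singularity $\frac{1}{n}(1,d)$ is isomorphic to $\frac{1}{n}(1,d^{*})$: swapping the two coordinates identifies the action $(x,y)\mapsto(\zeta_{n}x,\zeta_{n}^{d}y)$ with the action of $\frac{1}{n}(d,1)$, and then replacing the chosen generator $\zeta_{n}$ by $\zeta_{n}^{d^{*}}$ (which is again a generator of the cyclic group of order $n$, since $\gcd(d,n)=1$) rewrites the action as $\frac{1}{n}(1,d^{*})$.

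Next, I would use the fact that the HJ expansion $(a_{0},\dots,a_{r})$ of $n/d$ records the autointersection numbers of the chain of rational curves in the minimal resolution of $\frac{1}{n}(1,d)$, read off in the order determined by starting from the component meeting the strict transform of the first coordinate axis. Since the coordinate swap above interchanges the two axes and hence reverses the orientation of the exceptional chain, the HJ expansion of $n/d^{*}$ is precisely the reversed sequence $(a_{r},\dots,a_{0})$.

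Finally, since the HJ expansion uniquely determines the rational number it represents in the admissible range, the expansion of $n/d$ is symmetric if and only if it coincides with the expansion of $n/d^{*}$, equivalently $d\equiv d^{*}\pmod n$, equivalently $d^{2}\equiv 1\pmod n$.

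The only delicate point is the identification of the coordinate swap with the reversal of the exceptional chain, but this is a standard feature of the toric/combinatorial description of the minimal resolution of a cyclic quotient surface singularity (see e.g.\ \cite{reid}), so I do not anticipate any real obstacle.
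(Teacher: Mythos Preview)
Your argument is correct and essentially identical to the paper's: both introduce the multiplicative inverse $d^{*}$ (the paper calls it $b$), observe that swapping the two coordinates/eigenvectors turns $\frac{1}{n}(1,d)$ into $\frac{1}{n}(1,d^{*})$ while reversing the exceptional chain, and conclude that the expansion is symmetric iff $d=d^{*}$, i.e.\ $d^{2}\equiv 1\pmod n$. The paper additionally phrases the reversal in terms of reflecting the toric lattice along the line $x=y$, but this is just a restatement of the same coordinate swap.
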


\begin{proof}
	Let $1\le b\le n$ be such that $bd\cong 1\pmod{n}$, then $d^{2}\cong 1\pmod{n}$ if and only if $b=d$. Let $(a_{0},\dots,a_{r})$ be the expansion of $n/d$. The lattice, and hence the Newton polygon, of $\frac1n(1,b)$ cf. \cite[\S 2]{reid} is obtained by the one of $\frac1n(1,d)$ by reflecting along the line $x=y$, hence the Hirzebruch-Jung expansion of $n/b$ is given by $(a_{r},\dots,a_{0})$. Alternatively, $\zeta_{n}'=\zeta_{n}^{d}$ is a primitive $n$-th root of $1$ satisfying $\zeta_{n}'^{b}=\zeta_{n}$, hence the singularity $\frac1n(1,b)$ is obtained from $\frac1n(1,d)$ by swapping the eigenvectors of the representation and its Hirzebruch-Jung expansion is therefore $(a_{r},\dots,a_{0})$. The statement follows.  
\end{proof}

Let $X_{n,d}$ be the quotient of $\AA^{2}$ by the action of $\mu_{n}$ with eigenvalues $\zeta_{n},\zeta_{n}^{d}$. From now on, assume $d^{2}\cong 1\pmod{n}$. If the expansion of $n/d$ has odd length, denote by $\tilde{X}_{n,d}$ the minimal resolution of singularities, otherwise $\tilde{X}_{n,d}$ is the blow up of the minimal resolution in the central point. In any case, the exceptional divisor of $\tilde{X}_{n,d}$ is a string of rational curves of odd length. Observe that $(n,d)$ is a critical pair if and only if the central component of the exceptional divisor of $\tilde{X}_{n,d}$ has even autointersection.

Since $d^{2}\cong 1\pmod{n}$, it is immediate to check that the linear involution of $\AA^{2}$ which swaps the coordinates descends to an involution of $X_{n,d}$, and in turn this induces an involution of $\tilde{X}_{n,d}$. Denote by $C_{2}$ the group of order $2$, we have a $C_{2}$ action on all of these surfaces.

Assume that $n=n'\cdot m$, let $1\le d'\le n'$ be such that $d'\cong d\pmod{n'}$, then we have a natural $C_{2}$-equivariant morphism $X_{n',d'}\to X_{n,d}$ and hence a $C_{2}$-equivariant rational map $\tilde{X}_{n',d'}\dashrightarrow \tilde{X}_{n,d}$.

\begin{lemma}\label{lem:res}
	Let $n,d,n',d'$ be as above. There exists a regular surface $Y$ with an involution and $C_{2}$-equivariant proper morphisms $Y\to \tilde{X}_{n',d'}, Y\to\tilde{X}_{n,d}$ such that the diagram
	\[\begin{tikzcd}[column sep=small]
								&	Y\ar[dl]\ar[dr]	&				\\
		\tilde{X}_{n',d'}\ar[rr,dashed]	&					&	\tilde{X}_{n,d}
	\end{tikzcd}\]
	commutes and $Y\to X_{n',d'}$ is birational. Furthermore, the exceptional divisor of $Y$ has an irreducible component $E\subset Y$ with three properties: first, the involution maps $E$ to itself, second, $E$ is the unique component dominating the central component of the exceptional divisor of $\tilde{X}_{n,d}$, third, the same but for $\tilde{X}_{n',d'}$.
\end{lemma}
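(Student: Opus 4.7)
The plan is to build $Y$ by toric methods, which makes the $C_{2}$-equivariance manifest from the start. Write $N'\subset N\subset\QQ^{2}$ for the cocharacter lattices of the tori acting on $X_{n',d'}$ and $X_{n,d}$ respectively: both schemes are affine toric for the common cone $\sigma=\RR_{\geq 0}e_{1}+\RR_{\geq 0}e_{2}$, and the lattice inclusion $N'\subset N$ induces the covering map $X_{n',d'}\to X_{n,d}$. The $C_{2}$-action on either side is the toric involution swapping $e_{1}$ and $e_{2}$; the hypothesis $d^{2}\equiv 1\pmod n$ is exactly what guarantees this involution preserves both lattices.

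Let $\Sigma$ and $\Sigma'$ be the smooth fans of $\tilde{X}_{n,d}$ in $N$ and $\tilde{X}_{n',d'}$ in $N'$; both refine $\sigma$ and are $C_{2}$-stable. I take $Y$ to be the toric variety in $N'$ associated to a smooth $C_{2}$-equivariant common refinement $\Sigma''$ of $\Sigma'$ and the pullback of $\Sigma$ to $N'$ (start with any common refinement, symmetrize under $C_{2}$, and further subdivide equivariantly to attain smoothness). Then $Y$ inherits a $C_{2}$-action and proper $C_{2}$-equivariant toric morphisms to both $\tilde{X}_{n',d'}$ (by refinement of fans in $N'$) and $\tilde{X}_{n,d}$ (combining refinement with the lattice inclusion $N'\subset N$); the triangle commutes by construction, and $Y\to X_{n',d'}$ is birational because $\Sigma''$ refines $\sigma$ in the same lattice $N'$.

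By symmetry of the Hirzebruch--Jung expansion of $n/d$, the central component $F\subset\tilde{X}_{n,d}$ corresponds to the unique ray of $\Sigma$ in the involution-fixed direction $(1,1)$, and similarly $F'$ corresponds to the unique ray of $\Sigma'$ in that direction. When the HJ expansion has even length, the additional blow-up used to define $\tilde{X}_{n,d}$ is exactly the insertion of this $(1,1)$-ray, because symmetry makes the blown-up intersection point the unique toric fixed point on the diagonal. In particular, $\Sigma''$ contains a unique ray in direction $(1,1)$; let $E\subset Y$ be the corresponding toric divisor.

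The three properties are then consequences of the orbit--cone dictionary. The ray is fixed by the swap involution, so $E$ is $C_{2}$-invariant. Under $Y\to\tilde{X}_{n',d'}$ (resp.\ $Y\to\tilde{X}_{n,d}$) the $(1,1)$-ray of $\Sigma''$ maps onto the corresponding $(1,1)$-ray of $\Sigma'$ (resp.\ $\Sigma$), whence $E$ dominates $F'$ (resp.\ $F$); since no other ray of $\Sigma''$ lies in direction $(1,1)$, uniqueness holds in both cases. The subtle step, and the one I expect to require the most care, is the toric identification of the central component with the $(1,1)$-ray in the even-HJ-length case; but symmetry of the expansion, together with the fact that the toric involution fixes only the diagonal, pins this down.
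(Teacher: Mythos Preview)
Your toric argument is correct and is a genuinely different route from the paper's. The paper does not work with fans at all: it introduces the intermediate surface $Z_{n,d}$ obtained as the $\mu_{n}$-quotient of the blowup of $\AA^{2}$ at the origin, observes that $Z_{n,d}$ has at most two singular points which are swapped by the involution, and shows that its smooth locus embeds as an open subset of $\tilde{X}_{n,d}$. The functorial map $Z_{n',d'}\to Z_{n,d}$ then shows that the rational map $\tilde{X}_{n',d'}\dashrightarrow\tilde{X}_{n,d}$ is already defined on this central open set, so the indeterminacy locus is a finite $C_{2}$-stable set with no fixed points; the paper resolves on one orbit representative and transports by the involution to obtain $Y$. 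In that picture the distinguished component $E$ is the strict transform of the exceptional curve of $Z_{n',d'}$, which is untouched by the resolution process.

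Your approach trades this hands-on construction for the orbit--cone correspondence: once one notices that the central component is the divisor of the $(1,1)$-ray (which you justify correctly in both the odd and even HJ-length cases), everything follows formally from taking a symmetric smooth common refinement. This makes the $C_{2}$-equivariance and the three properties of $E$ essentially automatic, at the cost of invoking the toric dictionary. The paper's argument is more elementary in that sense, and its trick of building an equivariant resolution by resolving on half the indeterminacy locus and reflecting is of independent interest; your argument is shorter and more transparent about \emph{why} there is a unique component over each central curve.
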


\begin{proof}
	Let $B$ be the blowup of $\AA^{2}$ in the origin and $Z_{n,d}$ the quotient of $B$ by the induced actions of $\mu_{n}$. Using the Chevalley-Shephard-Todd theorem, it is easy to check that the surface $Z_{n,d}$ has either $0$ or $2$ singular points, and if they exist they are swapped by the involution. Let $Z_{n,d}^{\circ}$ be the smooth locus. Since $\tilde{X}_{n,d}$ is the minimal resolution of $X_{n,d}$ plus (possibly) a blow up, the universal properties of the minimal resolution and of the blow up give us a morphism $Z_{n,d}^{\circ}\to\tilde{X}_{n,d}$ which is readily checked to be an open embedding. We can obtain analogously an open embedding $Z_{n',d'}^{\circ}\subset\tilde{X}_{n',d'}$. The point of this construction is that there is a morphism $Z_{n',d'}\to Z_{n,d}$, and hence the rational map $\tilde{X}_{n',d'}\to\tilde{X}_{n,d}$ is defined on the open subset $Z_{n',d'}^{\circ}$.
	
	Let $S\subset \tilde{X}_{n',d'}$ be the finite set of points where $\tilde{X}_{n',d'}\dashrightarrow\tilde{Z}_{n,d}$ is not defined, by symmetry it is preserved by the involution.	Thanks to the above, $S$ maps to the singular locus of $Z_{n',d'}$, hence the action of $C_{2}$ on $S$ has no fixed points. Write $S=S_{1}\cup S_{2}$, with $S_{i}$ containing one point for each orbit. Let $Y_{1}\to \tilde{Z}_{n',d'}\setminus S_{2}$ be a resolution of $\tilde{Z}_{n',d'}\setminus S_{2}\to\tilde{Z}_{n,d}$, and define $Y_{2}$ as the composite $Y_{2}=Y_{1}\to \tilde{Z}_{n',d'}\setminus S_{2}\to\tilde{Z}_{n',d'}\setminus S_{1}$ where the second map is the involution. We may then obtain $Y$ by gluing $Y_{1}$ and $Y_{2}$.
\end{proof}

\begin{corollary}\label{cor:critical-odd}
	Let $n,n',m,d,d'$ be as above, and assume that $m$ is odd. Then $(n,d)$ is a critical pair if and only if $(n',d')$ is a critical pair.
\end{corollary}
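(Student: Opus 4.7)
The plan is to apply Lemma~\ref{lem:res}. Let $f\colon Y\to\tilde X_{n,d}$ and $f'\colon Y\to\tilde X_{n',d'}$ be the two $C_{2}$-equivariant birational morphisms from the common resolution, with $E\subset Y$ the distinguished $\iota$-invariant component; by the lemma, $E\to C$ and $E\to C'$ are isomorphisms, where $C$ and $C'$ are the central components of the exceptional divisors. Writing $C^{2}=-a$, $C'^{2}=-a'$, $E^{2}=-e$, the projection formula gives $e=a+k_{n,d}=a'+k_{n',d'}$, where $k_{n,d}=E\cdot(f^{*}C-E)$ and $k_{n',d'}=E\cdot(f'^{*}C'-E)$; in any equivariant factorization of $f$ into point blow-ups, $k_{n,d}$ equals the number of blow-ups whose center lies on the current strict transform of $C$. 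The corollary reduces to $k_{n,d}\equiv k_{n',d'}\pmod 2$.

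Since blow-ups whose center is not $\iota$-fixed occur in conjugate pairs, the parity of $k_{n,d}$ equals the parity of the number of $\iota$-fixed blow-up centers on the successive strict transforms of $C$. At such a fixed point $p$, the involution acts on the tangent space as $-1$ along $T_{p}C$ and as $\varepsilon_{p}\in\{\pm 1\}$ in the transverse direction; I say $p$ has type ``$+$'' if $\varepsilon_{p}=+1$ (so $p$ lies on a fixed curve of $\iota$ transverse to $C$) and type ``$-$'' if $\varepsilon_{p}=-1$ ($p$ is isolated). A direct tangent-space computation shows that blowing up $p$ produces a new fixed point on the strict transform of $C$ of the \emph{opposite} type. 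Hence the parity of the count of fixed-point blow-ups above each of the two fixed points $p_{0},p_{\infty}\in C$ of $\iota$ equals the XOR of the types at $p_{i}$ and at its preimage $q_{i}\in E$. Repeating the analysis for $f'$ yields
\[
k_{n,d}-k_{n',d'}\equiv\bigl(\mathrm{type}(p_{0})\oplus\mathrm{type}(p_{0}')\bigr)+\bigl(\mathrm{type}(p_{\infty})\oplus\mathrm{type}(p_{\infty}')\bigr)\pmod 2,
\]
where $p_{0}',p_{\infty}'\in C'$ are the images of $q_{0},q_{\infty}$ under $f'$.

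To conclude, I verify that the types agree on both sides. Using the toric description of $X_{n,d}$, the fixed curves of $\iota$ come from the $\iota$-invariant lines $\{y=\zeta x\}$ for $\zeta\in\mu_{\gcd(n,d+1)}$; a straightforward limit computation in the toric divisor corresponding to the central ray $(1,1)$ shows that one of the two fixed points on $C$ (say $p_{0}$) always lies on the fixed curve coming from $\zeta=1$, while $p_{\infty}$ lies on a fixed curve if and only if $d\equiv\pm 1\pmod{2^{v_{2}(n)}}$. Since $m$ is odd, $v_{2}(n)=v_{2}(n')$ and $d\equiv d'\pmod{2^{v_{2}(n)}}$, so both conditions transfer verbatim to $(n',d')$. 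Therefore $\mathrm{type}(p_{0})=\mathrm{type}(p_{0}')$ and $\mathrm{type}(p_{\infty})=\mathrm{type}(p_{\infty}')$, whence $k_{n,d}\equiv k_{n',d'}\pmod 2$ and $a\equiv a'\pmod 2$; since criticality means the central autointersection is even, this proves the corollary. The main obstacle is the tangent-space/toric identification of the two types, which is elementary but must be done carefully.
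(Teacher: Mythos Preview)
Your argument rests on a misreading of Lemma~\ref{lem:res}: only the map $f'\colon Y\to\tilde X_{n',d'}$ is asserted to be birational. The other map $f\colon Y\to\tilde X_{n,d}$ factors through $X_{n',d'}\to X_{n,d}$, which is the quotient by $\mu_{n}/\mu_{n'}\simeq\mu_{m}$ and hence generically of degree $m$. So $E\to C$ is \emph{not} an isomorphism in general, the identity $e=a+k_{n,d}$ with $k_{n,d}=E\cdot(f^{*}C-E)$ is false (the coefficient of $E$ in $f^{*}C$ is the ramification index, not $1$), and you cannot factor $f$ as a sequence of point blow-ups. The whole blow-up/type-counting machinery therefore applies only to the $f'$ side, and the comparison you set up never gets off the ground.

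The paper's proof is much shorter and uses exactly the feature you discarded: the degree $m$ is odd. Since $E$ is the unique component of $Y$ dominating $C$, the product of $\deg(E\to C)$ and the ramification index of $f$ along $E$ equals $m$, so both are odd. The projection formula then gives
\[
\deg(E\to C)\cdot C^{2}\;=\;E\cdot f^{*}C\;=\;(\text{ramification})\cdot E^{2}\;+\;E\cdot(\text{rest}),
\]
and the involution pairs up the remaining components of $f^{*}C$ so that the last term is even; hence $C^{2}\equiv E^{2}\pmod 2$. The same argument (with degree $1$) gives $C'^{2}\equiv E^{2}\pmod 2$. No analysis of fixed-point types or toric limits is needed.
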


\begin{proof}
	Let $Y$ be as in Lemma~\ref{lem:res} and let $E\subset Y, F\subset \tilde{X}_{n,d}, F'\subset X_{n',d'}$ be the central components, we want to show that $F^{2}\cong F'^{2}\pmod{2}$. Since $E\subset Y$ is the only curve dominating $F$ and $Y\to\tilde{X}_{n,d}$ has odd degree, the degree of $E\to F$ and the ramification of $Y\to X_{n,d}$ in $E$ are both odd. Using the involution and projection formula, this implies that $E^{2}\cong F^{2}\pmod{2}$. Similarly, $E^{2}\cong F'^{2}\pmod{2}$, hence we conclude.
\end{proof}

\begin{corollary}\label{cor:critical}
	Let $1\le d\le n$ be integers with $\gcd(d,n)=1$, let $2^{a}$ be the largest power of $2$ which divides $n$. Then $(n,d)$ is a critical pair if and only if $d^{2}\cong 1\pmod{n}$, $a\ge 1$ and $d\cong\pm 1\pmod{2^{a}}$. 
\end{corollary}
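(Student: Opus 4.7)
The plan is to combine the two preparatory results already in hand with a direct computation of Hirzebruch--Jung expansions for $n$ a power of $2$. First, Lemma~\ref{lem:sym} tells us that any critical pair must satisfy $d^{2}\cong 1\pmod{n}$, so throughout I assume this. Writing $n=2^{a}m$ with $m$ odd and letting $d'\in\{1,\dots,2^{a}\}$ be the residue of $d$ modulo $2^{a}$, Corollary~\ref{cor:critical-odd} (applied with $n'=2^{a}$, whose quotient $n/n'=m$ is odd) gives that $(n,d)$ is critical if and only if $(2^{a},d')$ is. Since $d\cong\pm 1\pmod{2^{a}}$ if and only if $d'\cong\pm 1\pmod{2^{a}}$, the full statement reduces to: for $n=2^{a}$ with $d^{2}\cong 1\pmod{2^{a}}$, the pair $(2^{a},d)$ is critical if and only if $a\ge 1$ and $d\cong\pm 1\pmod{2^{a}}$.

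For this reduced claim I would argue by cases on $a$. The case $a=0$ is vacuous since $n=1$ gives no singularity. For $a=1,2$, the condition $d^{2}\cong 1\pmod{2^{a}}$ already forces $d\cong\pm 1\pmod{2^{a}}$, and the expansions $(2)$, $(4)$, $(2,2,2)$ corresponding to $(n,d)=(2,1),(4,1),(4,3)$ are all critical by inspection. For $a\ge 3$, the square roots of $1$ modulo $2^{a}$ form the Klein four group $\{\pm 1,2^{a-1}\pm 1\}$; the cases $d\cong\pm 1\pmod{2^{a}}$ give the expansions $(2^{a})$ and the $A_{2^{a}-1}$ string $(2,2,\dots,2)$ of length $2^{a}-1$, both symmetric of odd length with even central term, hence critical.

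The remaining two cases $d=2^{a-1}\pm 1$ with $a\ge 3$ are handled by direct continued-fraction computation. For $d=2^{a-1}+1$, the identity $2-\frac{1}{k-1/2}=\frac{4k-4}{2k-1}$ with $k=2^{a-2}+1$ shows at once that $2^{a}/(2^{a-1}+1)$ has expansion $(2,2^{a-2}+1,2)$, symmetric of odd length but with odd central term $2^{a-2}+1$ (since $a\ge 3$), and hence not critical. For $d=2^{a-1}-1$, extracting the leading entry $a_{0}=3$ produces the tail fraction $(2^{a-1}-1)/(2^{a-1}-3)$, from which repeatedly pulling off $2$'s gives a sub-expansion of the form $(2,\dots,2,3)$ of length $2^{a-2}-1$; hence the full expansion is $(3,2,\dots,2,3)$ of total length $2^{a-2}$, which is even for $a\ge 3$ and so not critical. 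The main obstacle is the bookkeeping in this last step, since the intermediate tail $(2^{a-1}-1)/(2^{a-1}-3)$ is not itself of the parametric form $2^{b}/(2^{b-1}\pm 1)$ already analysed, and one must track the sequence of tails $(2^{a-1}-2k-1)/(2^{a-1}-2k-3)$ as $k$ decreases until the final entry $3$ is reached.
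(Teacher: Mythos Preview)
Your proof is correct and follows essentially the same route as the paper: reduce via Corollary~\ref{cor:critical-odd} to $n=2^{a}$, list the (at most four) square roots of $1$ modulo $2^{a}$, and check the critical-pair condition by computing each Hirzebruch--Jung expansion explicitly. Two cosmetic remarks: the index $k$ in your final paragraph should \emph{increase} rather than decrease, and your length $2^{a-2}$ for the $(3,2,\dots,2,3)$ expansion is in fact the correct one---the paper records $2^{a-1}-2$, which coincides only at $a=3$, though the parity conclusion is unaffected either way.
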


\begin{proof}
	By Corollary~\ref{cor:critical-odd} we may reduce to the case $n=2^{a}$. If $a=0$, then clearly $(1,1)$ is not a critical pair. There are at most four values of $d$ modulo $2^{a}$ such that $d^{2}\cong 1\pmod{2^{a}}$, and these are $1,2^{a-1}-1,2^{a-1}+1,2^{a}-1$. The statement then follows by computing explicitly the Hirzebruch-Jung expansions of $2^{a}/d$ for these four values of $d$. Here are the expansions.
	\begin{description}
			\item[$d=1$:] $(2^{a})$, odd length $1$, even central term.
			\item[$d=2^{a-1}-1$:] if $a\ge 3$, $(3,2,2,\dots,2,2,3)$, even length $2^{a-1}-2$.
			\item[$d=2^{a-1}+1$:] if $a\ge 3$, $(2,2^{a-2}+1,2)$, odd length $3$, odd central term.
			\item[$d=2^{a}-1$:] $(2,\dots,2)$, odd length $2^{a}-1$, even central term.
	\end{description} 
\end{proof}

We now go back to working over a field $k$ of arbitrary characteristic.

\begin{corollary}\label{cor:ch2}
	Every $2$-dimensional tame quotient singularity over a field of characteristic $2$ is liftable.
\end{corollary}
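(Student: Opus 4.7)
The plan is to suppose for contradiction that there exists a non-liftable $2$-dimensional tame quotient singularity $(S,s)$ over a field $k$ of characteristic $2$, and derive a parity contradiction from the numerical characterization of critical pairs.

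First, I would apply Proposition~\ref{prop:cyclic} to conclude that the fundamental group $G$ of $(S,s)$ is cyclic. By \cite[Corollary 6.4]{giulio-angelo-moduli} (cited in the introduction), every $2$-dimensional cyclic tame quotient singularity is of type $\frac{1}{n}(1,d)$ for some coprime integers $1\le d\le n$ with $n=|G|$. Tameness gives $\gcd(n,\cha k)=\gcd(n,2)=1$, so $n$ must be \emph{odd}.

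Next, I would invoke the argument of Proposition~\ref{prop:cycR}. Its proof in fact establishes a slightly stronger statement than what is formally recorded: starting from a \emph{non-liftable} singularity of type $\frac{1}{n}(1,d)$ over $k$, one descends the minimal resolution, shows the exceptional divisor $E$ satisfies $E(k)=\emptyset$, and then extracts from this the symmetry and parity conditions on the Hirzebruch-Jung expansion of $n/d$. Since our $(S,s)$ is non-liftable, the pair $(n,d)$ is therefore critical.

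Finally, I would apply Corollary~\ref{cor:critical}, which characterizes critical pairs numerically: if $(n,d)$ is critical, then the largest power $2^{a}$ dividing $n$ must satisfy $a\ge 1$, i.e.\ $n$ is even. This contradicts the oddness of $n$ forced by tameness, completing the argument. There is no genuine obstacle here beyond assembling the ingredients — the corollary is essentially the immediate combination of Proposition~\ref{prop:cyclic} (non-cyclic singularities are liftable) with Corollary~\ref{cor:critical} (critical pairs have even denominator), and the only mild subtlety is noting that the proof of Proposition~\ref{prop:cycR} yields the critical-pair conclusion directly from non-liftability, not merely from failure of type $\rR$.
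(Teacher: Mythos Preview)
Your proof is correct and follows exactly the paper's route: combine Proposition~\ref{prop:cyclic}, Proposition~\ref{prop:cycR}, and Corollary~\ref{cor:critical} to reach the parity contradiction that $n$ must be both odd (tameness in characteristic~$2$) and even (criticality). The ``subtlety'' you flag is not really one: a non-liftable singularity is automatically not of type~$\rR$ (take $K=k$, $k'=k$, $(U,u)=(S,s)$ in the definition), so Proposition~\ref{prop:cycR} applies as stated without needing to inspect its proof.
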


\begin{proof}
	Let $(S,s)$ be a non-liftable tame quotient singularity over a field $k$ of characteristic $2$, by Propositions~\ref{prop:cyclic} and \ref{prop:cycR} its base change to $\bar{k}$ is of type $\frac 1n(1,d)$ with $n$ odd and $(n,d)$ critical. This is in contradiction with Corollary~\ref{cor:critical}. 
\end{proof}

Because of Corollary~\ref{cor:ch2}, from now on we assume $\cha k\neq 2$. 

\subsection{Non-liftable singularities}

Since we are assuming $\cha k\neq 2$, the $2$-torsion subgroup $\br(k)[2]$ of the Brauer group corresponds to smooth conics. If $\br(k)[2]$ is trivial, then every $2$-dimensional tame quotient singularity is liftable since its associated variety cf. \cite[Definition 6.8]{giulio-angelo-moduli} is isomorphic to $\PP^{1}$ \cite[Proposition 6.10]{giulio-angelo-moduli}. Assume now that $\br(k)[2]$ is non-trivial, and let $(n,d)$ be a critical pair with $\cha k\nmid n$. We are going to construct a non-liftable singularity of type $\frac1n(1,d)$. 

If $\alpha\in\br(k)[2]$ is non trivial, there exist $s,t\in k^{*}$ such that $s$ is not a square and $t$ is not a norm for the extension $k(s^{1/2})/k$, see for instance \cite[Proposition 1.1.7]{gille-szamuely}. 

Consider the lattice $L\subset\ZZ^{2}$ generated by $(n,0),(0,n),(n-d,1)$. Denote by $L^{+}$ the lattice points with non-negative coordinates, and by $\bfx_{L}$ the set of monomials $x^{\alpha}y^{\beta}$ with $(\alpha,\beta)\in L^{+}$. Then $\spec K[\bfx_{L}]$ is a singularity of type $\frac 1n(1,d)$, see \cite[\S 2]{reid}.

Let $o$ be the order of $d-1$ modulo $n$. Since $(n,d)$ is a critical pair, then $d\cong\pm 1$ modulo every prime power dividing $n$. Using this, it is easy to check that $2o$ divides both $n$ and $d+1$.

Let $s^{1/2}\in K$ be a square root of $s$, $t^{1/2o}\in K$ a $2o$-th root. Consider the involution $\sigma$ of $k(s^{1/2},t^{1/2o})[x,y]$ defined by
\[s^{1/2}\mapsto -s^{1/2}, ~ t^{1/2o}\mapsto t^{1/2o},\]
\[x\mapsto t^{1/2o}y, ~ y\mapsto t^{-1/2o}x,\]
we have
\[\sigma(x^{\alpha}y^{\beta})=t^{(\alpha-\beta)/2o}x^{\beta}y^{\alpha}.\]
If $(\alpha,\beta)\in L$, then $(\alpha-\beta)/2o$ is an integer since $2o$ divides both $n$ and $d+1$, and $L$ is generated by $(n,0),(0,n),(n-d,1)$. Furthermore, since $(n-d)^{2}\cong 1\pmod{n}$, if $(\alpha,\beta)\in L$ then $(\beta,\alpha)\in L$. It follows that $\sigma$ restricts to an involution of $k(s^{1/2})[\bfx_{L}]\subset k(s^{1/2},t^{1/2})[x,y]$. Let $A$ be the $k$-algebra of $\sigma$-invariant elements of $k(s^{1/2})[\bfx_{L}]$, then $\spec A$ is a singularity of type $\frac1n(1,d)$. Let us show that it is not liftable. 

Let $E$ be the associated variety of the singularity \cite[\S 6.4]{giulio-angelo-moduli}, it is enough to show that $E(k)$ is empty \cite[Proposition 6.10]{giulio-angelo-moduli}. It is easy to check that $E_{k(s^{1/2})}$ identifies naturally with $\proj k(s^{1/2})[x^{o},y^{o}]$. If $[\mu:\nu]\in \proj k(s^{1/2})[x^{o},y^{o}](k(s^{1/2}))=\PP^{1}(k(s^{1/2}))$ is a point, then $\sigma[\mu:\nu]=[t\sigma(\nu):\sigma(\mu)]$. Clearly, $[1:0]$ is not $\sigma$-invariant. If $[\mu:\nu]$ with $\nu\neq 0$ is $\sigma$-invariant, then we get that $t=\mu\nu^{-1}\sigma(\mu\nu^{-1})$ is a norm with respect to the extension $k(s^{1/2})/k$. This is in contradiction with our initial assumption, hence $E(k)$ is empty. We have thus proved the following.

\begin{lemma}\label{lem:non-lift}
	Let $(n,d)$ be a critical pair, and $k$ a field with $\cha k\nmid n$. The following are equivalent.
	\begin{itemize}
		\item The $2$-torsion subgroup $\br(k)[2]$ of the Brauer group is not trivial.
		\item There exists a non-liftable $2$-dimensional singularity $(S,s)$ of type $\frac 1n(1,d)$ over $k$.
	\end{itemize}
\end{lemma}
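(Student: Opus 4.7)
The statement is a biconditional. The direction that a trivial $\br(k)[2]$ forces every $\frac 1n(1,d)$-singularity to be liftable was essentially observed in the discussion preceding the lemma: the associated variety of a $2$-dimensional tame quotient singularity is a smooth projective genus $0$ curve, hence a smooth conic; when $\br(k)[2]=0$ every such conic is $\PP^{1}$, so \cite[Proposition 6.10]{giulio-angelo-moduli} yields liftability. I will focus on producing, from a non-trivial Brauer class of index $2$, an explicit non-liftable form of $\frac 1n(1,d)$.

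Given $s,t\in k^{*}$ with $s$ not a square and $t$ not a norm from $k(s^{1/2})$, the plan is to realize a non-liftable form as a descent of the monomial model $\spec k(s^{1/2})[\bfx_{L}]$ along a suitably twisted involution $\sigma$. Concretely, $\sigma$ lifts the non-trivial element of $\mathrm{Gal}(k(s^{1/2})/k)$ and acts on the coordinates by $x\mapsto t^{1/2o}y$, $y\mapsto t^{-1/2o}x$, where $o$ is the additive order of $d-1$ in $\ZZ/n\ZZ$. The $k$-algebra $A$ of $\sigma$-invariants then provides $\spec A$ as a $k$-form of the singularity.

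The first technical step is to verify that $\sigma$ preserves $k(s^{1/2})[\bfx_{L}]$, which reduces to checking $2o\mid(\alpha-\beta)$ on the lattice generators $(n,0),(0,n),(n-d,1)$, together with the symmetry $(\alpha,\beta)\in L\Leftrightarrow (\beta,\alpha)\in L$. The symmetry follows from $(n-d)^{2}\equiv 1\pmod n$, and the divisibilities reduce to $2o\mid n$ and $2o\mid d+1$. The criticality of $(n,d)$ guarantees both: $n$ is even and $d$ is odd, so $d-1$ is even, giving $2o\mid n$; while the condition $d\equiv\pm 1\pmod{\ell^{a}}$ at every prime power $\ell^{a}\mid n$ forces $v_{\ell}(2o)\le v_{\ell}(d+1)$ at each $\ell$, giving $2o\mid d+1$.

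The second and decisive step is to show $\spec A$ is non-liftable by checking that the associated variety $E$ has no $k$-point, via \cite[Proposition 6.10]{giulio-angelo-moduli}. The base change $E_{k(s^{1/2})}$ identifies with $\proj k(s^{1/2})[x^{o},y^{o}]\simeq\PP^{1}$, and the $\sigma$-action becomes $[\mu:\nu]\mapsto [t\sigma(\nu):\sigma(\mu)]$; a fixed point with $\nu\neq 0$ immediately yields $t=(\mu/\nu)\sigma(\mu/\nu)$, a norm from $k(s^{1/2})$, contradicting the choice of $t$, while $[1:0]$ is visibly not fixed. Hence $E(k)=\emptyset$. The main obstacle throughout is keeping the twist $t^{\pm 1/2o}$ consistent with the $x\leftrightarrow y$ symmetry so that $\sigma$ is genuinely an involution of the correct subalgebra, which is exactly where the critical-pair hypothesis gets used.
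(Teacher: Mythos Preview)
Your proposal is correct and follows essentially the same route as the paper: the easy direction via the associated conic and \cite[Proposition 6.10]{giulio-angelo-moduli}, and the explicit descent of $\spec k(s^{1/2})[\bfx_{L}]$ along the twisted involution $\sigma(x)=t^{1/2o}y$, $\sigma(y)=t^{-1/2o}x$, with non-liftability checked on the associated $\PP^{1}$ exactly as you describe. You actually supply more detail than the paper on the divisibilities $2o\mid n$ and $2o\mid d+1$, which the paper leaves as ``easy to check''.
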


By putting together Proposition~\ref{prop:cyclic}, Proposition~\ref{prop:cycR}, Corollary~\ref{cor:critical} and Lemma~\ref{lem:non-lift}, we obtain a proof of Theorem~\ref{thm:R2sing}.

\begin{remark}
	Another way of constructing singularities with particular arithmetic properties, e.g. being non-liftable, is to use a theorem of J. Koll\'ar \cite[Corollary 5]{kollar} which classifies twisted forms of a singularity in terms of Galois cohomology. While for cyclic singularities in dimension $2$ we can simply use explicit constructions as in Lemma~\ref{lem:non-lift}, for higher dimensional singularities Koll\'ar's more conceptual approach might be helpful.
\end{remark}

\section{\texorpdfstring{$\rR_{2}$}{R2}-groups in characteristic \texorpdfstring{$0$}{0}}

Once we have a complete classification of $\rR$-singularities in dimension $2$, it is in theory possible to classify completely the groups of type $\rR_{2}$ by checking every finite subgroup of $\GL_{2}$. We do this in every characteristic for abelian groups, and in characteristic $0$ for non-abelian ones. Since most finite groups are $\rR_{2}$, our point of view will be that of searching for groups \emph{not} of type $\rR_{2}$, or $\neg\rR_{2}$.

\begin{proposition}\label{r2-ab}
Let $G$ be a finite abelian group and $p$ either $0$ or a prime not dividing $|G|$. Then $G$ is $\neg\rR_{2}$ in characteristic $p$ if and only if $G\simeq C_{a}\times C_{2ab}$ for some $a,b\ge 1$.
\end{proposition}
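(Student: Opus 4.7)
\emph{Sufficiency.} For $G=C_a\times C_{2ab}$ with generators $g_1,g_2$ of orders $a,2ab$, I take $\rho(g_1)=\operatorname{diag}(\zeta_a,1)$ and $\rho(g_2)=\operatorname{diag}(\zeta_{2ab},\zeta_{2ab}^{-1})$: the second eigenvalue of $\rho(g_1^sg_2^t)$ is $\zeta_{2ab}^{-t}$, forcing $t=0$ in the kernel, and then the first forces $s=0$, so $\rho$ is faithful. A direct pseudo-reflection computation shows that the reflection subgroup is $N=\langle g_1,g_2^{2b}\rangle$ of order $a^2$, so $G/N$ is cyclic of order $2b$ generated by the class of $g_2$. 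The $N$-invariants of $\AA^2$ are $k[x^a,y^a]$, and in the coordinates $u=x^a,v=y^a$ the generator of $G/N$ acts as $\operatorname{diag}(\zeta_{2b},\zeta_{2b}^{-1})$, giving $(\AA^2/G,[0])$ of type $\frac{1}{2b}(1,2b-1)$. Since $2b-1\equiv-1\pmod{2b}$ this is a critical pair, so the singularity is $\neg\rR$ by Theorem~\ref{thm:R2sing}.

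\emph{Necessity.} Now let $G$ be abelian and $\neg\rR_2$, and fix a faithful $\rho$ with $(\AA^2/G,[0])$ non-$\rR$. Over the algebraic closure $\rho$ splits as $\chi_1\oplus\chi_2$ with $\chi_1,\chi_2\in G^\vee$, and faithfulness amounts to $\chi_1,\chi_2$ generating the character group $G^\vee$; hence $G$ is $2$-generated, so by the structure theorem $G\simeq C_m\times C_{m'}$ with $m\mid m'$. My goal is to show that $m'/m$ is even. By Theorem~\ref{thm:R2sing}, the singularity is of type $\frac{1}{n}(1,d)$ with $n$ even, and its local fundamental group is $C_n=G/N$, where $N=\ker\chi_1\cdot\ker\chi_2$ is the reflection subgroup of the diagonal action. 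Pontryagin duality then identifies
\[(G/N)^\vee = N^\perp = \langle\chi_1\rangle\cap\langle\chi_2\rangle\subseteq G^\vee,\]
and this intersection decomposes as a product of its prime-primary parts.

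\emph{Key step and main obstacle.} Since $n$ is even, the $2$-primary part $\langle\chi_1|_{G_2}\rangle\cap\langle\chi_2|_{G_2}\rangle$ of the intersection must be nontrivial, where $G_2$ is the $2$-Sylow of $G$. Assume, for contradiction, that $G_2\simeq C_{2^a}\times C_{2^a}$ with equal invariant factors. Since $\rho|_{G_2}$ is still faithful, the restricted characters generate $G_2^\vee$ of order $2^{2a}$; but the exponent of $G_2^\vee$ is $2^a$, so each $\langle\chi_i|_{G_2}\rangle$ has order at most $2^a$, and the order formula
\[|\langle\chi_1|_{G_2}\rangle|\cdot|\langle\chi_2|_{G_2}\rangle| = |\langle\chi_1|_{G_2}\rangle\cap\langle\chi_2|_{G_2}\rangle|\cdot 2^{2a}\]
collapses to force the intersection to be trivial, a contradiction. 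Hence the invariant factors of $G_2$ are $(2^\alpha,2^\beta)$ with $\alpha<\beta$; combined with $m\mid m'$ at the odd primes, this yields $2m\mid m'$, and writing $m'=2mb$ we conclude $G\simeq C_a\times C_{2ab}$ with $a=m$. The main technical subtlety throughout is the identification of the local fundamental group with $G/N$ in the abelian diagonal setting, and the attendant verification that $G/N$ acts as a small subgroup on $\AA^2/N\simeq\AA^2$, so that the cyclic singularity classification of Theorem~\ref{thm:R2sing} applies directly without any further iterated reflection-quotient.
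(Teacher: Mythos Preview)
Your proof is correct, but the paper's route for necessity is considerably shorter. For sufficiency you choose a different faithful embedding than the paper (yours yields $\tfrac{1}{2b}(1,2b-1)$, the paper's yields $\tfrac{1}{2b}(1,1)$); both are critical pairs, so this is an inessential variation. For necessity, however, the approaches genuinely diverge. The paper writes $G\simeq C_a\times C_{ab}$ and observes that after simultaneous diagonalization the subgroup $C_a\times C_a\subseteq G$ is \emph{exactly} the set of diagonal matrices of order dividing $a$; in particular every such matrix is a product of two pseudoreflections $\operatorname{diag}(\zeta,1)\cdot\operatorname{diag}(1,\eta)$ lying in $G$, so $C_a\times C_a\subseteq P$. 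Hence $[G:P]$ divides $[G:C_a\times C_a]=b$, and since $[G:P]=n$ is even by Theorem~\ref{thm:R2sing}, $b$ is even. That is the whole argument.

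Your approach instead passes to the dual: you identify $(G/N)^\vee$ with $\langle\chi_1\rangle\cap\langle\chi_2\rangle$ and then run a cardinality argument on the $2$-Sylow to force unequal invariant factors there. This is perfectly valid, and the identification $N^\perp=\langle\chi_1\rangle\cap\langle\chi_2\rangle$ is a nice structural fact, but it is more machinery than needed here. The paper's insight that the full $a$-torsion of the diagonal torus already sits inside $G$ (by cardinality) and is visibly generated by pseudoreflections bypasses duality entirely. Your closing remark about $G/N$ acting as a small group on $\AA^2/N\cong\AA^2$ is a genuine point, but it is a standard consequence of Chevalley--Shephard--Todd together with the maximality of $N$, and the paper takes it as understood.
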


\begin{proof}
	The representation $(x,y)\mapsto \begin{psmallmatrix} \zeta_{2ab}^{2bx+y} & 0 \\ 0 & \zeta_{2ab}^{y} \end{psmallmatrix}$ of $C_{a}\times C_{2ab}$ generates a singularity of type $\frac{1}{2b}(1,1)$, hence $C_{a}\times C_{2ab}$ is $\neg\rR_{2}$ by Theorem~\ref{thm:R2sing}.
	
	On the other hand, assume that $G$ has a faithful representation $G\subset \GL_{2}(K)$ generating a singularity which is not of type $\rR$. Since $G$ is abelian, up to conjugation we may assume that the matrices of $G$ are diagonal. We may write $G\simeq C_{a}\times C_{ab}$ for some $a,b\ge 1$, we want to show that $b$ is even. By cardinality reasons, $C_{a}\times C_{a}\subseteq G\subset \GL_{2}(K)$ is the set of all diagonal matrices whose order is finite and divides $a$. Because of this, $C_{a}\times C_{a}$ is contained in the subgroup $P$ of $G$ generated by pseudoreflections. Since the associated singularity is not of type $\rR$, the index of $P$ is even by Theorem~\ref{thm:R2sing}. The index of $P$ divides $b$, which is thus even, as desired. 
\end{proof}

We now study non-abelian groups of type $\rR_{2}$ in characteristic $0$. We use the classification of finite subgroups of $\GL_{2}(\CC)$, which essentially goes back to Klein, and of which Cohen \cite{cohen} gives a modern account. Furthermore, we use Shephard and Todd's \cite{shephard-todd} classification of finite subgroups of $\GL_{2}(\CC)$ generated by pseudoreflections. Again, Cohen \cite{cohen} gives a modern account. If $K$ is an algebraically closed field of characteristic $0$, then every finite subgroup of $\GL_{2}(K)$ is contained in $\GL_{2}(\overline{\QQ})\subset\GL_{2}(K)$, hence we may use these classification over $K$ too. 

\begin{remark}
	While writing our joint paper with A. Vistoli \cite{giulio-angelo-moduli}, it was not clear to us whether a product of $\rR_{d}$ groups was itself $\rR_{d}$. From the classification of $\rR_{2}$ groups it will follow that, if $S_{4}'$ is the inverse image of $S_{4}\subset\PGL_{2}(K)$ in $\SL_{2}(K)$, then $C_{3}$ and $S_{4}'$ are of type $\rR_{2}$ in characteristic $0$, but their product $C_{3}\times S_{4}'$ is not.
\end{remark}

\subsection{Set-up and notations}\label{sec:notation}

We will mostly follow the notations of \cite{cohen}. We write $\zeta_{n}=e^{2\pi i/n}\in\bar{\QQ}\subset K$ and denote by $\mu_{n}$ the cyclic subgroup of $\GL_{2}(K)$ generated by $\zeta_{n}\operatorname{Id}$.

Assume that $H$ is a finite subgroup of $\GL_{2}(K)$ and $K\subseteq H$ is a normal subgroup with cyclic quotient, and suppose that an isomorphism $\phi:\mu_{wd}/\mu_{d}\xrightarrow{\sim} H/K$ is given. Following Cohen, we define
\[(\mu_{wd} \mid \mu_{d} ; H \mid K )_{\phi}=\{\lambda h \mid \lambda\in\mu_{wd}, h\in H, \phi([\lambda])=[h]\in H/K\}\subset\GL_{2}(K)\]

In practice, every time we will use this notation the conjugacy class of the group will not depend on $\phi $, see \cite[p.393]{cohen}, hence we will drop the subscript. 

Cohen uses some subgroups of $\SL_{2}(K)$ to which he attaches names that we find confusing, such as $\bD_{n}$ for a group which is not dihedral. We use the same subgroups, but with different names. Our convention is that, for $G=C_{n},D_{n},A_{4},S_{4},A_{5}$, we write $G'$ for a specific subgroup of $\SL_{2}(K)$ whose image in $\PGL_{2}(K)$ is isomorphic to $G$, with the exception that, for $n$ even, $C_{n}'$ has image isomorphic to $C_{n/2}$. We write Cohen's names between parentheses.

\[({\bC_{n}})~~C_{n}'=\left <\begin{pmatrix} \zeta_{n} & 0 \\ 0 & \zeta_{n}^{-1} \end{pmatrix}\right>;~~~ ({\bD_{n}})~~ D_{n}'=\left<\begin{pmatrix} 0 & i \\ i & 0 \end{pmatrix}, C_{2n}'\right>;\]
\[({\bT})~~ A_{4}'=\left<\frac{\zeta_{8}}{\sqrt{2}}\begin{pmatrix} 1 & i \\ 1 & -i \end{pmatrix}, D_{2}'\right>;~~~ ({\bO})~~ S_{4}'=\left<\begin{pmatrix} \zeta_{8} & 0 \\ 0 & \zeta_{8}^{-1} \end{pmatrix}, A_{4}'\right>.\]
We define $A_{5}'$ (Cohen's ${\bI})$ as any subgroup of $\SL_{2}(K)$ containing $-\operatorname{Id}$ whose image in $\PGL_{2}(K)$ is isomorphic to $A_{5}$: there is only one such group up to conjugation, and we are not interested in the specific representation.

Moreover, given integers $p|m$, the group $G(m,p,2)$ is defined as the subgroup of $\GL_{2}(K)$ with elements
\[G(m,p,2)=\left\{\begin{psmallmatrix} 0 & \zeta_{m}^{a} \\ \zeta_{m}^{b} & 0 \end{psmallmatrix} ,\begin{psmallmatrix} \zeta_{m}^{a} & 0 \\ 0 & \zeta_{m}^{b} \end{psmallmatrix} \mid a+b\cong 0\pmod{p} \right\}.\]
The group $G(m,p,2)$ has order $2m^{2}/p$ and is generated by the pseudoreflections $\begin{psmallmatrix} 0 & 1 \\ 1 & 0 \end{psmallmatrix}, \begin{psmallmatrix} 0 & \zeta_{m} \\ \zeta_{m}^{-1} & 0 \end{psmallmatrix},\begin{psmallmatrix} 1 & 0 \\ 0 & \zeta_{m}^{p} \end{psmallmatrix}$, its image in $\PGL_{2}(K)$ is a dihedral group.

A finite subgroup of $\GL_{2}(K)$ generated by pseudoreflections is either abelian, conjugate to $G(m,p,2)$ for some $m,p$, or conjugate to one of the groups listed in \cite[Table p.395]{cohen}. The groups in this list are numbered from 3 to 22, we call them $\ST_{3},\ST_{4},\dots,\ST_{22}$ accordingly.

In the following $G\subset\GL_{2}(K)$ is a finite subgroup, $P\subset G$ the subgroup generated by pseudoreflections and $\bar{G},\bar{P}$ the images in $\PGL_{2}(K)$. We organize our search of $\neg\rR$ singularities $\AA^{2}/G$ by fixing the isomorphism class of $\bar{G}$. Up to conjugation, non-abelian finite subgroups of $\GL_{2}(K)$ are classified in \cite[Theorem p.393]{cohen} by nine infinite families: four families for $\bar{G}=D_{m}$, two for $A_{4}$, two for $S_{4}$ and one for $A_{5}$. The families corresponding to $D_{m}$ are parametrized by two positive integers $m,q$, the others by one positive integer $q$ (Cohen parametrizes with $m$ these last families, but using $q$ is more coherent with the notation for the first families). We write $g=\gcd(m,q)$, $m'=m/g$, $q'=q/g$.

If $\bar{G}\simeq C_{m}$, then $G$ is abelian, hence we can ignore this case. For $\bar{G}=D_{m}$ we consider each of the corresponding four families of subgroups of $\GL_{2}(K)$ and compute directly the subgroup $P$. In the remaining cases such a direct computation would be much more complex since the elements of $G$ are not as easily written, so we use a different strategy. A necessary condition for $\AA^{2}/G$ to be $\neg\rR$ is that $\bar{G}/\bar{P}$ is cyclic: for $\bar{G}=A_{4},S_{4},A_{5}$ this puts strong constraints on $\bar{P}$ and hence, using the Shephard-Todd classification, on $P$.

Let $1\le d\le n$ be as usual and $1\le b\le n$ such that $bd\simeq 1\pmod{n}$. If $G\subset\GL_{2}(K)$ is a representation with dual $G^{\vee}=G\subset\GL_{2}(K)$ such that $\AA^{2}/G\simeq \frac1n(1,d)$, then $\AA^{2}/G^{\vee}\simeq\frac 1n(1,b)$ is the quotient of the dual representation. Using Theorem~\ref{thm:R2sing} it is easy to see that $\frac1n(1,d)$ is of type $\rR$ if and only if the same holds for $\frac1n(1,b)$. We sometimes work with the dual representation $G^{\vee}\subset\GL_{2}(K)$ because the computations naturally yield coordinates on the cotangent space of $\AA^{2}/P$, rather than coordinates of the tangent space.

\section{The case \texorpdfstring{$\bar{G}\simeq D_{m}$}{Dm}} 

\begin{proposition}\label{Dr2}
	Assume that $\bar{G}\simeq D_{m}$. The singularity $\AA^{2}/G$ is $\neg\rR$ if and only if $G$ is conjugate to one of the following groups. 
	\begin{itemize}
		\item $(\mu_{4q} \mid \mu_{2q} ; D_{m}' \mid C_{2m}')$ with $q$ even, $m|q$ and $q/m$ odd.
		\item $(\mu_{4q} \mid \mu_{2q} ; D_{2m}' \mid D_{m}')$ with $(2q',m'+q')$ a critical pair.
		\item $\mu_{2q}D_{m}'$ with $q$ odd and $m|q$.
		\item $\mu_{2q}D_{m}'$ with $q$ even and $(q',m')$ a critical pair.
	\end{itemize}
\end{proposition}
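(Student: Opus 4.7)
The plan is to compute, in each of the four Cohen families of subgroups $G\subset\GL_{2}(K)$ with $\bar G\simeq D_{m}$, the pseudoreflection subgroup $P\subset G$; use the Chevalley--Shephard--Todd theorem to identify $\AA^{2}/G$ with $\AA^{2}/(G/P)$; and then apply Theorem~\ref{thm:R2sing} via Corollary~\ref{cor:critical} to decide when the resulting quotient is $\neg\rR$. Proposition~\ref{prop:cyclic} forces the fundamental group $G/P$ to be cyclic as a necessary condition, so in each family we first discard the parameter choices for which $G/P$ fails to be cyclic. When $G/P$ is cyclic of order $n$, the induced representation of a generator on the tangent space of $\AA^{2}/P$ at the origin has eigenvalues $(\zeta_{n}, \zeta_{n}^{d})$ for some $d$ coprime to $n$, and $\AA^{2}/G$ is of type $\frac{1}{n}(1,d)$.

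In each family, pseudoreflections split into two classes. Diagonal pseudoreflections are of the form $\operatorname{diag}(\zeta,1)$ or $\operatorname{diag}(1,\zeta)$ and always form a subgroup that can be read off from the $\mu_{2q}$- and $C_{2m}'$-data. Off-diagonal pseudoreflections are involutions $\begin{psmallmatrix} 0 & \alpha \\ \beta & 0 \end{psmallmatrix}$ with $\alpha\beta=1$ (eigenvalues $1,-1$); whether such elements appear in $G$ reduces to a congruence of the form $\zeta_{N}^{b}=-1$, whose solvability depends on the parities of $q$ and of the integers encoding the coset data $\phi$. The listed cases in the proposition should correspond exactly to the parameter choices where the off-diagonal pseudoreflections behave in such a way that $G/P$ is cyclic \emph{and} the resulting $(n,d)$ is a critical pair in the sense of Corollary~\ref{cor:critical}.

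Concretely, for Family 1 ($G=(\mu_{4q}\mid\mu_{2q};D_{m}'\mid C_{2m}')$) off-diagonal pseudoreflections appear iff $q$ is odd; when $q$ is even with $m\mid q$ and $q/m$ odd, $P$ is purely diagonal, $G/P$ is cyclic of even order, and the induced action yields a critical singularity. For Family 3 ($G=\mu_{2q}D_{m}'$ with $q$ odd), an analogous computation gives $G/P$ cyclic with the critical type $\frac{1}{n}(1,n-1)$ exactly when $m\mid q$. Families 2 and 4 involve the larger dihedral part $D_{2m}'$ or an even value of $q$; the induced representation of $G/P$ on $\AA^{2}/P$ then has eigenvalues depending non-trivially on both $m'$ and $q'$, and the singularity type should turn out to be $\frac{1}{2q'}(1, m'+q')$ for Family 2 and $\frac{1}{q'}(1, m')$ for Family 4. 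Corollary~\ref{cor:critical} translates the $\neg\rR$ condition directly into the statements that $(2q', m'+q')$ and $(q',m')$ are critical pairs respectively.

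The main obstacle is purely combinatorial bookkeeping: for each family one must carefully enumerate the twisted diagonal and off-diagonal elements, determine which of them are pseudoreflections using the $\alpha\beta=1$ and $1$-eigenvalue criteria, identify the isomorphism class of $G/P$, and then find explicit coordinates in which the induced representation diagonalizes so that the parameters $(n,d)$ are visible. Once this is done, matching the output against the criticality condition of Corollary~\ref{cor:critical} is a mechanical check; the delicacy lies in making the parity and divisibility conditions on $m$, $q$, $m'$, $q'$ come out as stated in each of the four cases.
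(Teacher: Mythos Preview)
Your overall strategy is exactly the paper's: for each Cohen family compute $P$, check whether $G/P$ is cyclic, diagonalize the induced action on the tangent space of $\AA^{2}/P$, and match $(n,d)$ against the criticality criterion. The outline for your Families~1, 2 and 4 is accurate.

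There is, however, a genuine gap. Your ``four families'' are the four bullet points of the proposition, but those bullets list the $\neg\rR$ \emph{outcomes}, not the four Cohen families with $\bar G\simeq D_{m}$. Bullets three and four are both $\mu_{2q}D_{m}'$ split by the parity of $q$; the actual fourth Cohen family is $(\mu_{4q}\mid\mu_{q};D_{m}'\mid C_{m}')$ with $m$ odd, and you never touch it. Since this family contributes no $\neg\rR$ singularities it is absent from the proposition's list, but you still have to verify that absence to get the ``only if'' direction. The paper spends a full subsection on it, with three sub-cases ($q\equiv 1,3\pmod 4$ and $q$ even); the last one is not a one-liner, as $G/P$ \emph{is} cyclic of even order $4q'$ there, and one must compute that the resulting type $\frac{1}{4q'}(1,2q'+1)$ is non-critical because $q'$ is even.

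A smaller point: for $\mu_{2q}D_{m}'$ with $q$ odd and $m\mid q$ the singularity is $\frac{1}{4q'}(1,2q'+1)$, not $\frac{1}{n}(1,n-1)$ (these coincide only for $q'=1$). This does not change the answer, since $(4q',2q'+1)$ is critical for every odd $q'$, but it indicates the eigenvalue computation still needs to be done carefully rather than guessed.
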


\subsection{\texorpdfstring{$\bm{G=(\mu_{4q} \mid \mu_{2q} ; D_{m}' \mid C_{2m}')}$}{Dm1}} We are going to show that $\AA^{2}/G$ is $\neg \rR$ if and only if $q$ is even, $m|q$ and $q/m$ is odd. Let us write explicitly the elements of $G$.
\[G=\left\{\zeta_{4q}^{2a+q+1}\begin{psmallmatrix} 0 & \zeta_{2m}^{-b} \\ \zeta_{2m}^{b} & 0 \end{psmallmatrix} ,\zeta_{2q}^{a}\begin{psmallmatrix} \zeta_{2m}^{b} & 0 \\ 0 & \zeta_{2m}^{-b} \end{psmallmatrix} \mid 0\le a < 2q, 0\le b < 2m \right\}\]
The order of $G$ is $4qm$ (in the presentation above, each element appears twice). We want to compute the subgroup $P\subseteq G$ generated by pseudoreflections. Let us divide by cases.

\subsubsection{If $q$ is odd} The pseudoreflection in $G$ are
\[\left\{\begin{psmallmatrix} 0 & \zeta_{2m}^{-b} \\ \zeta_{2m}^{b} & 0 \end{psmallmatrix} ,\begin{psmallmatrix} \zeta_{g}^{a} & 0 \\ 0 & 1 \end{psmallmatrix},\begin{psmallmatrix} 1 & 0 \\ 0 & \zeta_{g}^{a} \end{psmallmatrix} \mid 0 \le a < g, 0\le b < 2m \right\}.\]
In particular, it is easy to see that the generated group $P$ contains the subgroup $<\begin{psmallmatrix} 0 & 1 \\ 1 & 0 \end{psmallmatrix},C_{2m}'>$ whose order is $4m$, hence the index of $P$ divides $q$ and thus is odd. It follows that $\AA^{2}/G$ is of type $\rR$.

\subsubsection{If $q$ is even} The pseudoreflection in $G$ are
\[\left\{\begin{psmallmatrix} \zeta_{g}^{a} & 0 \\ 0 & 1 \end{psmallmatrix},\begin{psmallmatrix} 1 & 0 \\ 0 & \zeta_{g}^{a} \end{psmallmatrix} \mid 0 \le a < g \right\},\]
hence
\[P=\left\{\begin{psmallmatrix} \zeta_{g}^{a} & 0 \\ 0 & \zeta_{g}^{b} \end{psmallmatrix} \mid 0 \le a,b < g \right\}\]
has order $g^{2}$ and index $4q'm'$. If $\bar{G},\bar{P}$ are the images in $\PGL_{2}(K)$, then $\bar{G}/\bar{P}\simeq D_{m}/C_{g}\simeq D_{m/g}$. If $g\neq m$, this group is not cyclic, hence the same holds for $G/P$ and $\AA^{2}/G$ is of type $\rR$.

If $g=m$, i.e. $m|q$, the class $\omega\in G/P$ of $\zeta_{4q}\begin{psmallmatrix} 0 & 1 \\ 1 & 0 \end{psmallmatrix}$ has order $4q'=|G/P|$, hence $G/P$ is cyclic.  The ring of invariants $K[x,y]^{P}$ is $K[x^{m},y^{m}]$ and $\omega$ acts on the cotangent space of $\spec K[x^{m},y^{m}]$ with matrix $\begin{psmallmatrix} 0 & \zeta_{4q'} \\ \zeta_{4q'} & 0 \end{psmallmatrix}$. The eigenvalues of this matrix are $\zeta_{4q'},-\zeta_{4q'}=\zeta_{4q'}^{2q'+1}$, hence $\AA^{2}/G^{\vee}\simeq\frac{1}{4q'}(1,2q'+1)$. It is easy to check that $(4q',2q'+1)$ is a critical pair if and only if $q'$ is odd.

\subsection{\bm{$G=(\mu_{4q} \mid \mu_{2q} ; D_{2m}' \mid D_{m}')$}} We are going to prove that $\AA^{2}/G$ is $\neg\rR$ if and only if $(2q',m'+q')$ is a critical pair (in particular, $q'$ and $m'$ must have different parity).

Let us write explicitly the elements of $G$.
\[G=\left\{\zeta_{4q}^{2a+q+1}\begin{psmallmatrix} 0 & \zeta_{4m}^{-2b-1} \\ \zeta_{4m}^{2b+1} & 0 \end{psmallmatrix} ,\zeta_{4q}^{2a+1}\begin{psmallmatrix} \zeta_{4m}^{2b+1} & 0 \\ 0 & \zeta_{4m}^{-2b-1} \end{psmallmatrix},\right.\]
\[\left.\zeta_{4q}^{2a+q}\begin{psmallmatrix} 0 & \zeta_{2m}^{-b} \\ \zeta_{2m}^{b} & 0 \end{psmallmatrix} ,\zeta_{2q}^{a}\begin{psmallmatrix} \zeta_{2m}^{b} & 0 \\ 0 & \zeta_{2m}^{-b} \end{psmallmatrix}
\mid 0\le a < 2q, 0\le b < 2m \right\}\]
The order of $G$ is $8qm$ (in the presentation above, each element appears twice). We want to compute the subgroup $P\subseteq G$ generated by pseudoreflections. Let us divide by cases.

\subsubsection{If $q,m$ are odd} The pseudoreflection in $G$ are
\[\left\{\begin{psmallmatrix} 0 & \zeta_{4m}^{-2b-1} \\ \zeta_{4m}^{2b+1} & 0 \end{psmallmatrix} ,\begin{psmallmatrix} \zeta_{2g}^{a} & 0 \\ 0 & 1 \end{psmallmatrix},\begin{psmallmatrix} 1 & 0 \\ 0 & \zeta_{2g}^{a} \end{psmallmatrix} \mid 0 \le a < 2g, 0\le b < 2m \right\}.\]
The generated group $P$ contains a subgroup $\left\{\begin{psmallmatrix} 0 & \pm i \\ \pm i & 0 \end{psmallmatrix},\begin{psmallmatrix} \pm 1 & 0 \\ 0 & \pm 1 \end{psmallmatrix}\right\}$ of order $8$, hence $P$ has odd index and thus $\AA^{2}/G$ is of type $\rR$.

\subsubsection{If $v_{2}(q)=v_{2}(m) \ge 1$} The pseudoreflections in $G$ are
\[\left\{\begin{psmallmatrix} 0 & \zeta_{2m}^{-b} \\ \zeta_{2m}^{b} & 0 \end{psmallmatrix}, \begin{psmallmatrix} \zeta_{2g}^{a} & 0 \\ 0 & 1 \end{psmallmatrix},\begin{psmallmatrix} 1 & 0 \\ 0 & \zeta_{2g}^{a} \end{psmallmatrix} \mid 0 \le a < 2g, 0 \le b < 2m \right\},\]
hence $P=G(2m,m',2)$ has order $8gm$ and index $q'$, which is odd. It follows that $\AA^{2}/G$ is of type $\rR$.

\subsubsection{If $q$ is even, $v_{2}(q) \neq v_{2}(m)$} The pseudoreflections in $G$ are
\[\left\{\begin{psmallmatrix} 0 & \zeta_{2m}^{-b} \\ \zeta_{2m}^{b} & 0 \end{psmallmatrix}, \begin{psmallmatrix} \zeta_{g}^{a} & 0 \\ 0 & 1 \end{psmallmatrix},\begin{psmallmatrix} 1 & 0 \\ 0 & \zeta_{g}^{a} \end{psmallmatrix} \mid 0 \le a < g, 0 \le b < 2m \right\},\]
hence $P=G(2m,2m',2)$ has order $4gm$ and even index $2q'$. The class $\omega$ of $\zeta_{4q}\begin{psmallmatrix} \zeta_{4m} & 0 \\ 0 & \zeta_{4m}^{-1} \end{psmallmatrix}$ in the quotient $G/P$ has order equal to $2q'$, hence $G/P$ is cyclic generated by $\omega$. The ring of invariants is $K[x,y]^{P}=K[(xy)^{g},(x^{2m}+y^{2m})]$, see \cite[\S 2.5]{cohen}, hence $\omega$ acts on the cotangent space with matrix $\begin{psmallmatrix} \zeta_{2q'} & 0 \\ 0 & \zeta_{2q'}^{m'+q'} \end{psmallmatrix}$ and thus we have a singularity of type $\frac{1}{2q'}(1,m'+q')$, which is $\neg\rR$ if and only if $(2q',m'+q')$ is a critical pair.

\subsubsection{If $q$ is odd and $m$ is even} The pseudoreflection in $G$ are
\[\left\{\begin{psmallmatrix} 0 & \zeta_{4m}^{-2b-1} \\ \zeta_{4m}^{2b+1} & 0 \end{psmallmatrix} ,\begin{psmallmatrix} \zeta_{g}^{a} & 0 \\ 0 & 1 \end{psmallmatrix},\begin{psmallmatrix} 1 & 0 \\ 0 & \zeta_{g}^{a} \end{psmallmatrix} \mid 0 \le a < g, 0\le b < 2m \right\}.\]

Consider the matrix $A=\begin{psmallmatrix} \zeta_{8m}^{-1} & 0 \\ 0 & \zeta_{8m} \end{psmallmatrix}$, then we have $A^{-1}PA=G(2m,2m',2)$, which has order $4mg$ and index $2q'$. The class $\omega\in A^{-1}GA/A^{-1}PA$ of $\zeta_{4q}\begin{psmallmatrix} \zeta_{4m} & 0 \\ 0 & \zeta_{4m}^{-1} \end{psmallmatrix}$ has order $2q'$, hence $G/P$ is cyclic. The rest of this sub-case is analogous to the preceding sub-case, thus we get that $\AA^{2}/G$ is $\neg\rR$ if and only if $(2q',m'+q')$ is a critical pair.

\subsection{\bm{$G=\mu_{2q}D_{m}'$}}\label{muD} We are going to prove that $\AA^{2}/G$ is $\neg\rR$ if $q$ is odd and $m|q$ or if $q$ is even and $(q',m')$ is a critical pair.

Let us write explicitly the elements of $G$.
\[G=\left\{\zeta_{4q}^{2a+q}\begin{psmallmatrix} 0 & \zeta_{2m}^{-b} \\ \zeta_{2m}^{b} & 0 \end{psmallmatrix} ,\zeta_{2q}^{a} \begin{psmallmatrix} \zeta_{2m}^{b} & 0 \\ 0 & \zeta_{2m}^{-b} \end{psmallmatrix} \mid 0\le a < 2q, 0\le b < 2m \right\}\]
The order of $G$ is $4qm$.

\subsubsection{If $q$ is odd} The pseudoreflections in $G$ are
\[\left\{\begin{psmallmatrix} \zeta_{g}^{a} & 0 \\ 0 & 1 \end{psmallmatrix},\begin{psmallmatrix} 1 & 0 \\ 0 & \zeta_{g}^{a} \end{psmallmatrix} \mid 0 \le a < g\right\}\]
hence $P\simeq C_{g}^{2}$ has index $4q'm'$. If $g\neq m$, then $\bar{G}/\bar{P}\simeq D_{m}/C_{g}\simeq D_{m'}$ is not cyclic, hence $\AA^{2}/G$ is of type $\rR$.

If $g=m$, i.e. $m \mid q$ or $m'=1$, then it is easy to check that the class $\omega\in G/P$ of $\zeta_{4q}\begin{psmallmatrix} 0 & 1 \\ 1 & 0 \end{psmallmatrix}$ has order $4q'$, thus $G/P$ is cyclic generated by $\omega$. The ring of invariants is $K[x,y]^{P}=K[x^{m},y^{m}]$ and $\omega$ acts on the cotangent space with matrix $\begin{psmallmatrix} 0 & \zeta_{4q'} \\ \zeta_{4q'} & 0 \end{psmallmatrix}$, which has eigenvalues $\pm \zeta_{4q'}$. The singularity $\AA^{2}/G^{\vee}$ is thus of type $\frac{1}{4q'}(1,1+2q')$, which is $\neg\rR$ since $(4q',1+2q')$ is a critical pair for $q'$ odd.

\subsubsection{If $q$ is even} The pseudoreflection in $G$ are
\[\left\{\begin{psmallmatrix} 0 & \zeta_{2m}^{-b} \\ \zeta_{2m}^{b} & 0 \end{psmallmatrix} , \begin{psmallmatrix} \zeta_{g}^{a} & 0 \\ 0 & 1 \end{psmallmatrix},\begin{psmallmatrix} 1 & 0 \\ 0 & \zeta_{g}^{a} \end{psmallmatrix} \mid 0 \le a < g, 0\le b < 2m \right\}\]
hence $P=G(2m,2m',2)$ which has order $4mg$ and index $q'$. The class $\omega\in G/P$ of $\zeta_{2q}$ has order $q'$ and thus generates $G/P$. The ring of invariants is $K[x,y]^{P}=K[(xy)^{g},x^{2m}+y^{2m}]$, see \cite[\S 2.5]{cohen}, and $\omega$ acts on the cotangent space with matrix $\begin{psmallmatrix} \zeta_{q'} & 0 \\ 0 & \zeta_{q'}^{m'} \end{psmallmatrix}$. It follows that $\AA^{2}/G$ is a singularity of type $\frac{1}{q'}(1,m')$ which is $\neg\rR$ if and only if $(q',m')$ is a critical pair.

\subsection{\bm{$G=(\mu_{4q} \mid \mu_{q} ; D_{m}' \mid C_{m}')}$ with $m$ odd} We are going to show that $\AA^{2}/G$ is always of type $\rR$.

\[G=\left\{\zeta_{4q}^{4a+q+1}\begin{psmallmatrix} 0 & \zeta_{m}^{-b} \\ \zeta_{m}^{b} & 0 \end{psmallmatrix} , \zeta_{2q}^{2a+q+1} \begin{psmallmatrix} \zeta_{m}^{b} & 0 \\ 0 & \zeta_{m}^{-b} \end{psmallmatrix},\right.\]
\[\left.\zeta_{4q}^{4a+3q+3}\begin{psmallmatrix} 0 & \zeta_{m}^{-b} \\ \zeta_{m}^{b} & 0 \end{psmallmatrix} , \zeta_{q}^{a} \begin{psmallmatrix} \zeta_{m}^{b} & 0 \\ 0 & \zeta_{m}^{-b} \end{psmallmatrix} \mid 0\le a < q, 0\le b < m \right\}\]
The order of $G$ is $4qm$ if $q$ is even and $2qm$ if $q$ is odd.

\subsubsection{If $q\cong 3\pmod{4}$} We can write the elements of $G$ as
\[G=\left\{\zeta_{q}^{a}\begin{psmallmatrix} 0 & \zeta_{m}^{-b} \\ \zeta_{m}^{b} & 0 \end{psmallmatrix} , \zeta_{q}^{a} \begin{psmallmatrix} \zeta_{m}^{b} & 0 \\ 0 & \zeta_{m}^{-b} \end{psmallmatrix} \mid 0\le a < q, 0\le b < m \right\}\]
and the pseudoreflections are
\[\left\{\begin{psmallmatrix} 0 & \zeta_{m}^{-b} \\ \zeta_{m}^{b} & 0 \end{psmallmatrix} , \begin{psmallmatrix} \zeta_{g}^{a} & 0 \\ 0 & 1 \end{psmallmatrix},\begin{psmallmatrix} 1 & 0 \\ 0 & \zeta_{g}^{a} \end{psmallmatrix}  \mid 0\le a < g, 0\le b < m \right\}.\]

We have that $P=G(m,m',2)$ which has order $2mg$ and odd index $q'$. It follows that $\AA^{2}/G$ is of type $\rR$.

\subsubsection{If $q\cong 1\pmod{4}$} We can write the elements of $G$ as
\[G=\left\{-\zeta_{q}^{a}\begin{psmallmatrix} 0 & \zeta_{m}^{-b} \\ \zeta_{m}^{b} & 0 \end{psmallmatrix} , \zeta_{q}^{a} \begin{psmallmatrix} \zeta_{m}^{b} & 0 \\ 0 & \zeta_{m}^{-b} \end{psmallmatrix} \mid 0\le a < q, 0\le b < m \right\}\]
and the pseudoreflections are
\[\left\{-\begin{psmallmatrix} 0 & \zeta_{m}^{-b} \\ \zeta_{m}^{b} & 0 \end{psmallmatrix} , \begin{psmallmatrix} \zeta_{g}^{a} & 0 \\ 0 & 1 \end{psmallmatrix},\begin{psmallmatrix} 1 & 0 \\ 0 & \zeta_{g}^{a} \end{psmallmatrix}  \mid 0\le a < g, 0\le b < m \right\}.\]

If $A=\begin{psmallmatrix} 0 & i \\ -i & 0 \end{psmallmatrix}$, then we have that $A P A^{-1}=G(m,m',2)$ which has order $2mg$, hence $P$ has odd index $q'$. It follows that $\AA^{2}/G$ is of type $\rR$.

\subsubsection{If $q$ is even} We can write the elements of $G$ as
\[G=\left\{\zeta_{4q}^{2a+1}\begin{psmallmatrix} 0 & \zeta_{m}^{-b} \\ \zeta_{m}^{b} & 0 \end{psmallmatrix} , \zeta_{2q}^{a} \begin{psmallmatrix} \zeta_{m}^{b} & 0 \\ 0 & \zeta_{m}^{-b} \end{psmallmatrix} \mid 0\le a < 2q, 0\le b < m \right\}\]
The pseudoreflections are
\[\left\{\begin{psmallmatrix} \zeta_{g}^{a} & 0 \\ 0 & 1 \end{psmallmatrix},\begin{psmallmatrix} 1 & 0 \\ 0 & \zeta_{g}^{a} \end{psmallmatrix} \mid 0 \le a < g\right\},\]
hence $P$ has order $g^{2}$ and index $4q'm'$.

If $g\neq m$, then $G/P$ is not cyclic since $\bar{G}/\bar{P}\simeq D_{m}/C_{g}$ is not cyclic, hence $\AA^{2}/G$ is of type $\rR$.

If $g=m$, i.e. $m|q$, then the class $\omega\in G/P$ of $\zeta_{4q}\begin{psmallmatrix} 0 & 1 \\ 1 & 0 \end{psmallmatrix}$ has order $4q'=|G/P|$ and hence $G/P$ is cyclic. The ring of invariants is $K[x,y]^{P}=K[x^{m},y^{m}]$ and $\omega$ acts on the cotangent space with matrix $\begin{psmallmatrix} 0 & \zeta_{4q'} \\ \zeta_{4q'} & 0 \end{psmallmatrix}$, which has eigenvalues $\pm \zeta_{4q'}$. The singularity $\AA^{2}/G^{\vee}$ is thus of type $\frac{1}{4q'}(1,1+2q')$, which is of type $\rR$ since $q'$ is even and hence $(4q',1+2q')$ is not a critical pair.

\section{The case \texorpdfstring{$\bar{G}\simeq A_{4}$}{A4}}

\begin{proposition}
	If $\bar{G}\simeq A_{4}$, then $\AA^{2}/G$ is $\neg\rR$ if and only if $G$ is conjugate to one of the following groups.
	\begin{itemize}
		\item $(\mu_{6q} \mid \mu_{2q} ; A_{4}' \mid D_{2}')$ for $q=4,8$.
		\item $\mu_{2q}A_{4}'$ for $4|q$.
	\end{itemize}
\end{proposition}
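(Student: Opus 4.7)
The plan is to mimic the general framework of Section 3 (used there for $\bar G \simeq D_m$) but with the subtlety that the primitive groups $A_4', \mu_{2q}A_4'$ do not have diagonal/anti-diagonal elements. First I would set up what it means for $\AA^2/G$ to be $\neg\rR$: by Proposition~\ref{prop:cyclic} the fundamental group $G/P$ must be cyclic, where $P\subseteq G$ is the subgroup generated by pseudoreflections. Since $P$ is normal in $G$ and the only normal subgroups of $A_4$ with cyclic quotient are $V_4$ and $A_4$ itself, we must have $\bar P\in\{V_4,A_4\}$. Combined with Cohen's classification, this means that $G$ belongs to one of the two families listed (those with $\bar G\simeq A_4$), and the work is then to determine, inside each family, for which $q$ the pseudoreflection subgroup has image $V_4$ or $A_4$ in $\PGL_2$, and for those values compute the resulting cyclic quotient singularity.

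For the identification of pseudoreflections I would use eigenvalue bookkeeping. The elements of $A_4'\subset\SL_2(K)$ have orders $1,2,3,4,6$ and their eigenvalue pairs are $(1,1)$, $(-1,-1)$, $(\zeta_3,\zeta_3^{-1})$, $(\pm i,\mp i)$, $(\zeta_6,\zeta_6^{-1})$. A product $\lambda h$ with $\lambda\in\mu_N$ is a pseudoreflection iff $\lambda$ equals the inverse of exactly one eigenvalue of $h$. Hence, in $\mu_{2q}A_4'$, pseudoreflections coming from order-$4$ elements appear precisely when $4\mid 2q$, and pseudoreflections from order-$3$ or $6$ elements appear when $3\mid 2q$; in the twisted family $(\mu_{6q}\mid\mu_{2q};A_4'\mid D_2')$ the same analysis applies, subject to the compatibility condition $\phi([\lambda])=[h]$ which forces $\lambda$ to lie in a coset-specific part of $\mu_{6q}$. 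For each sub-case one then reads off $\bar P$, which is generated by the images in $\PGL_2$ of these order-$4$ and order-$3$ pseudoreflections (Shephard--Todd's classification gives an \emph{a priori} list of possible $P$'s, which shortens the verification).

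Once the cases with $\bar P\in\{V_4,A_4\}$ are identified, I would compute the invariant ring $K[x,y]^P$ using Shephard--Todd (for $\bar P=V_4$, one gets $G(2,1,2)$-type invariants, i.e. $(xy, x^2+y^2)$ or a variant thereof; for $\bar P=A_4$ one gets the classical tetrahedral invariants of degrees $6$ and $4$, or $4$ and $6$, depending on $P$). Then a generator of $G/P$ acts linearly on the two-dimensional cotangent space spanned by these fundamental invariants, and diagonalizing this action gives the isomorphism $\AA^2/G\simeq \frac1n(1,d)$. Finally, I apply Theorem~\ref{thm:R2sing} (via Corollary~\ref{cor:critical}) to check whether $(n,d)$ is a critical pair with $d\equiv\pm 1\bmod \ell^a$ for each prime power $\ell^a\mid n$. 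I expect exactly the values $q=4,8$ to survive in the twisted family (the constraint coming from needing $d\equiv\pm1$ modulo the $2$-part and $3$-part simultaneously, which will cut off larger $q$), and $4\mid q$ to survive in $\mu_{2q}A_4'$.

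The main obstacle is the bookkeeping in identifying the pseudoreflection subgroup $P$ for the twisted family, because the compatibility condition $\phi([\lambda])=[h]$ restricts which scalar multiples $\lambda h$ actually lie in $G$, and this interacts delicately with the divisibility conditions required to produce pseudoreflections. A secondary difficulty is computing the action of the generator of $G/P$ on the cotangent space $(K[x,y]^P)_{\mathrm{top}}$: since the invariant coordinates are homogeneous polynomials of degrees $>1$, one must check that the generator acts by a diagonal matrix in some basis and read off the eigenvalues correctly, a computation that does not simplify to the diagonal/anti-diagonal form used in the $\bar G\simeq D_m$ cases.
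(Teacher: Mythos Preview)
Your strategy matches the paper's: reduce to $\bar P\in\{D_2,A_4\}$ via the cyclic-quotient constraint, then within each of Cohen's two $A_4$-families identify $P$ from the Shephard--Todd list, compute the action of $G/P$ on $K[x,y]^P$, and test the resulting $\frac1n(1,d)$ against the critical-pair criterion.

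Two points where the paper is sharper than your sketch. (i) When $\bar P=A_4$, the paper avoids eigenvalue bookkeeping on non-scalar elements by observing that $G=\mu_{2q}P$ with $P=\ST_n$ for some $4\le n\le 7$; then $G/P$ is generated by a scalar, which acts on a degree-$d$ invariant by $\zeta^{d}$, so the singularity type is read directly from the degrees column of Cohen's table. This gives $\frac{1}{q/2}(1,3)$ for $\ST_6$ and $\frac{1}{q/6}(1,1)$ for $\ST_7$---not the degrees $4,6$ you guessed, which only cover $\ST_4$. (ii) When $\bar P=D_2$ (and $4\mid q$), the pseudoreflection subgroup is $G(4,2,2)$ of order $16$, not $G(2,1,2)$; its invariants are $(xy)^2$ and $x^4+y^4$, and the paper gets this for free by reusing the $\mu_{2q}D_m'$ computation from the dihedral section with $m=2$. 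You should also make explicit the check that no extra order-$3$ pseudoreflections appear (equivalently that $\ST_4$ or $\ST_5$ is not contained in $G$), since when they do the sub-case feeds back into the $\bar P=A_4$ analysis; this is what separates the $3\mid q'$ and $3\nmid q'$ sub-cases in each family. With these corrections your plan goes through.
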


A necessary condition for $\AA^{2}/G$ to be $\neg\rR$ is that $\bar{G}/\bar{P}$ is cyclic. This leaves us only two possibilities for $\bar{P}$: either $\bar{P}=\bar{G}$ or $\bar{P}\simeq D_{2}\subset A_{4}$, $\bar{G}/\bar{P}\simeq C_{3}$.

\subsection{\bm{$\bar{P}=\bar{G}\simeq A_{4}$}}\label{a4a4} In this case, we have $G=\mu_{2q}P$ where $\mu_{2q}$ is the intersection of $G$ with the center of $\GL_{2}(K)$ and, up to conjugation, we may assume that $P$ is $\ST_{n}$ for $n=4,\dots,7$. Let $a$ be the index of $P$ in $G$, then $P\cap\mu_{2q}=\mu_{2q/a}$ and $G/P\simeq \mu_{2q}/\mu_{2q/a}\simeq C_{a}$, the singularity $\AA^{2}/G$ is cyclic of type $\frac1{a}(1,d)$ for some integer $d$. Since $G/P$ is generated by the class of the matrix $\zeta_{2q/a}\begin{psmallmatrix} 1 & 0 \\ 0 & 1 \end{psmallmatrix}$, whether $(a,d)$ is a critical pair can be easily decided using the degrees of the monomials generating the ring of $P$-invariants: these can be found in the third column of \cite[Table p.395]{cohen}. 

\subsubsection{$P=\ST_{4}$ or $P=\ST_{5}$} In these cases, $q$ is odd, since otherwise $G=\mu_{2q}P$ contains a strictly larger group generated by pseudoreflections, namely $\ST_{6}$ or $\ST_{7}$. Since $P\cap\mu_{2}=\mu_{2}$ for these two groups, we get that $2q/a$ is even, $a$ is odd and hence $\AA^{2}/G$ is of type $\rR$.

\subsubsection{$P=\ST_{6}=(\mu_{12} \mid \mu_{4} ; A_{4}' \mid D_{2}')$} We have $\mu_{2q/a}=\mu_{4}$, $a=q/2$ and $\zeta_{q/2}$ acts on the cotangent space of $\AA^{2}/P$ with eigenvalues $\zeta_{q/2},\zeta_{q/2}^{3}$. We thus have a singularity of type $\frac{1}{q/2}(1,3)$ which is $\neg\rR$ if and only if $q=4,8$. 

\subsubsection{$P=\ST_{7}=\mu_{12}A_{4}'$} We have $\mu_{2q/a}=\mu_{12}$, $a=q/6$ and $\zeta_{q/6}$ acts with eigenvalues $\zeta_{q/6},\zeta_{q/6}$. We thus have a singularity of type $\frac{1}{q/6}(1,1)$ which is $\neg \rR$ if and only if $12|q$.

\subsection{\bm{$\bar{P}\simeq D_{2}\subset A_{4}$}} There are two classes of finite subgroups $G\subset\GL_{2}(K)$ such that $\bar{G}\simeq A_{4}$ \cite[p.393]{cohen}: up to conjugation, we may assume that $G$ is either $\mu_{2q}A_{4}'$ or $(\mu_{6q} \mid \mu_{2q} ; A_{4}' \mid D_{2}')$, they both have order $24q$. Since $\bar{P}\simeq D_{2}$, in both cases $P$ is contained in $\mu_{2q}D_{2}'$. We have already computed the subgroup of $\mu_{2q}D_{2}'$ generated by pseudoreflections in \ref{muD}. If $q$ is odd then $P$ is trivial, hence $\AA^{2}/G$ is of type $\rR$ since $G$ is not cyclic. For $q$ even, we have that the subgroup $P_{0}\subseteq P$ generated by pseudoreflections of $\mu_{2q}D_{2}'$ is $G(4,2,2)$, which has order $16$ and index $3q/2$. If $4\nmid q$ then $3q/2$ is odd and hence $\AA^{2}/G$ is of type $\rR$, let us assume $4|q$ and write $q'=q/4$, $P_{0}$ has index $6q'$. We are interested in the cases in which $P_{0}=P$ (since otherwise $\bar{P}\not\simeq D_{2}$).

\subsubsection{$G=\mu_{8q'}A_{4}'$} If $3|q'$ then $\ST_{5}=\mu_{6}A_{4}'\subseteq G$ is generated by pseudoreflections, hence $\bar{P}\not\simeq D_{2}$. Assume $3\nmid q'$. It can be checked that the class $\omega$ of $\frac{\zeta_{8q'}\zeta_{8}}{\sqrt{2}}\begin{psmallmatrix} 1 & i \\ 1 & -i \end{psmallmatrix}\in\mu_{8q'}A_{4}'$ has order $6q'$ in $G/P_{0}$, which is thus cyclic. The ring of invariants is $K[x,y]^{P_{0}}=K[(xy)^{2},x^{4}+y^{4}]$ and hence $\omega$ acts on the cotangent space with matrix
\[-\frac{\zeta_{2q'}}{4}\begin{pmatrix} 2 & 12 \\ -1 & 2 \end{pmatrix}\]

whose eigenvalues are $\zeta_{6q'}^{3+4q'},\zeta_{6q'}^{3+2q'}$. Since $3\nmid q'$, then $\gcd(6q',3+4q')=\gcd(6q',3+2q')=1$, hence $G/P_{0}$ acts with no non-trivial pseudoreflections and $P=P_{0}$. If $d$ is the inverse of $3+4q'$ modulo $6q'$, then the singularity is of type $\frac{1}{6q'}(1,d(3+2q'))$. Observe that $(3+4q')^{2}\cong(3+2q')^{2}\pmod{6q'}$ and that $3+4q'\cong 3+2q'\pmod{2q'}$. This implies that, if $3\nmid q'$, then $(6q',d(3+2q'))$ is a critical pair and hence $\AA^{2}/G$ is $\neg\rR$.

\subsubsection{$G=(\mu_{24q'} \mid \mu_{8q'} ; A_{4}' \mid D_{2}')$} If $3\nmid q'$ then $G$ contains $\ST_{4}=(\mu_{6} \mid \mu_{2} ; A_{4}' \mid D_{2}')$ which is generated by pseudoreflections, hence $\bar{P}\not\simeq D_{2}$. Assume $3|q'$. The class $\omega$ of $\frac{\zeta_{24q'}\zeta_{8}}{\sqrt{2}}\begin{psmallmatrix} 1 & i \\ 1 & -i \end{psmallmatrix}\in G$ has order $6q'$ in $G/P_{0}$, which is thus cyclic. The ring of invariants is $K[x,y]^{P_{0}}=K[(xy)^{2},x^{4}+y^{4}]$ and hence $\omega$ acts on the tangent space with matrix
\[-\frac{\zeta_{6q'}}{4}\begin{pmatrix} 2 & 12 \\ -1 & 2 \end{pmatrix}\]
whose eigenvalues are $\zeta_{6q'}^{1+4q'},\zeta_{6q'}^{1+2q'}$. Since $3|q'$, then $\gcd(6q',1+2q')=\gcd(6q',1+4q')=1$, hence $G/P_{0}$ acts with no non-trivial pseudoreflections and thus $P=P_{0}$. If $d$ is the inverse of $1+4q'$ modulo $6q'$, then the singularity is of type $\frac{1}{6q'}(1,d(1+2q'))$. Since $(1+4q')^{2}\not\cong (1+2q')^{2}\pmod{6q'}$, this is not a critical pair and hence $\AA^{2}/G$ is of type $\rR$.

\section{The case \texorpdfstring{$\bar{G}\simeq S_{4}$}{S4}}

\begin{proposition}
	If $\bar{G}\simeq S_{4}$, then $\AA^{2}/G$ is $\neg\rR$ if and only if $G$ is conjugate to one of the following groups.
	\begin{itemize}
		\item $\mu_{2q}S_{4}'$ for $q=3,8,9,16$ or $24|q$,
		\item $(\mu_{4q} \mid \mu_{2q} ; S_{4}' \mid A_{4}')$ for $12|q$, $24\nmid q$.
	\end{itemize}
\end{proposition}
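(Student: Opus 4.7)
The plan is to mirror Section~4 with $S_{4}$ replacing $A_{4}$. The first observation is that, for $\AA^{2}/G$ to be $\neg\rR$, the quotient $\bar G/\bar P$ must be cyclic; since the only normal subgroups of $S_{4}$ with cyclic quotient are $A_{4}$ and $S_{4}$ itself, only the subcases $\bar P\simeq S_{4}$ and $\bar P\simeq A_{4}$ survive. Cohen's classification \cite[p.393]{cohen} gives two families for $\bar G\simeq S_{4}$, namely $\mu_{2q}S_{4}'$ and $(\mu_{4q}\mid\mu_{2q};S_{4}'\mid A_{4}')$, and these are precisely the two families appearing in the statement.

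In the subcase $\bar P\simeq S_{4}$, one writes $G=\mu_{2q}P$ with $P$ a finite reflection group whose image in $\PGL_{2}(K)$ is $S_{4}$; up to conjugation, $P$ is one of the eight Shephard--Todd groups $\ST_{8},\dots,\ST_{15}$. For each such $P$, the quotient $G/P\simeq C_{a}$ with $a=2q/|\mu_{2q}\cap P|$ is cyclic, generated by the class of a scalar matrix, and this class acts on the cotangent space of the smooth quotient $\AA^{2}/P$ via $\operatorname{diag}(\zeta_{a}^{d_{1}},\zeta_{a}^{d_{2}})$, where $d_{1},d_{2}$ are the degrees of the fundamental invariants of $P$ read off from \cite[Table p.395]{cohen}. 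Applying Corollary~\ref{cor:critical} to the resulting singularity $\tfrac{1}{a}(1,d_{1}d_{2}^{-1})$ and discarding the $q$ for which $\mu_{2q}P$ secretly contains a larger reflection group (the same phenomenon that excluded even $q$ for $P=\ST_{4},\ST_{5}$), the surviving values should collapse to $q=3,8,9,16$ and $24\mid q$, all realisable as $\mu_{2q}S_{4}'$.

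In the subcase $\bar P\simeq A_{4}$, the preimage in $G$ of $A_{4}\subset S_{4}$ is $\mu_{2q}A_{4}'$ for both possible shapes of $G$. The pseudoreflection subgroup $P\subseteq\mu_{2q}A_{4}'$ can therefore be read off directly from \S\ref{muD}, supplemented by any additional pseudoreflections contributed by the nontrivial coset of $G$ over $\mu_{2q}A_{4}'$ (which project to transpositions in $S_{4}$). Once $P$ is identified and a coset representative of $G$ modulo $P$ is exhibited to show that $G/P$ is cyclic, computing $K[x,y]^{P}$ along the lines of \cite[\S 2.5]{cohen} and reading off the eigenvalues of this generator produces the associated cyclic singularity $\tfrac{1}{n}(1,d)$. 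Theorem~\ref{thm:R2sing}, combined with the constraint that $P$ genuinely has image $A_{4}$ (so that $q$ is not too divisible and $P$ does not already contain the transpositions of $S_{4}$), should then isolate $12\mid q$ and $24\nmid q$ as the criterion for the family $(\mu_{4q}\mid\mu_{2q};S_{4}'\mid A_{4}')$; the $\mu_{2q}S_{4}'$ falling into this regime are absorbed into the first subcase.

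The main obstacle is combinatorial: the first subcase requires traversing eight reflection groups rather than the four of the $A_{4}$ case, and the second subcase must be carefully separated from the first to avoid double-counting groups $G$ that admit two presentations. The eigenvalue and criticality computations themselves are routine applications of Corollary~\ref{cor:critical} and Theorem~\ref{thm:R2sing}, entirely parallel to those already carried out for the dihedral and $A_{4}$ quotients, so no new conceptual input should be required.
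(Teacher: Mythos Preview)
Your two-subcase split ($\bar P\simeq S_{4}$ versus $\bar P\simeq A_{4}$) matches the paper, and your plan for the first subcase is the right one. But the claimed output of that subcase is wrong: the values $q=3$ and $q=9$ do \emph{not} arise there. For odd $q$ the paper shows that each of $\ST_{8},\ST_{10},\ST_{12},\ST_{13},\ST_{14},\ST_{15}$ yields an odd-index quotient $G/P$ and hence a singularity of type $\rR$, while $\ST_{9}=\mu_{8}S_{4}'$ and $\ST_{11}=\mu_{24}S_{4}'$ force $4\mid q$ and $12\mid q$ respectively. The first subcase therefore contributes only $q=8,16$ (from $\ST_{9}$) and $24\mid q$ (from $\ST_{11}$).

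The values $q=3,9$ for $\mu_{2q}S_{4}'$ come from the \emph{second} subcase. For $q$ odd with $3\mid q$ one finds $P=\ST_{5}=\mu_{6}A_{4}'$, so $\bar P=A_{4}$, and the resulting singularity is $\neg\rR$ precisely when $q=3$ or $q=9$. Your sentence ``the $\mu_{2q}S_{4}'$ falling into this regime are absorbed into the first subcase'' is therefore backwards, and following your plan as written would miss these two values entirely.

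Two smaller issues. First, the cross-reference to \S\ref{muD} is off: that section computes pseudoreflections of $\mu_{2q}D_{m}'$, not of $\mu_{2q}A_{4}'$; the relevant input is the $A_{4}$ analysis (or the Shephard--Todd list directly, which is what the paper uses here). Second, the paper inserts a short lemma that the normalizer of $A_{4}$ in $\PGL_{2}(K)$ is $S_{4}$, in order to pin $P$ down as literally equal to some $\ST_{n}$ with $4\le n\le 7$ (not merely conjugate), so that Cohen's explicit invariants can be used. Your alternative route of enumerating pseudoreflections directly could in principle bypass this, but you have not actually carried it out, and that computation is where the real content of the second subcase lies.
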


A necessary condition for $\AA^{2}/G$ to be $\neg\rR$ is that $\bar{G}/\bar{P}$ is cyclic. This leaves us two possibilities for $\bar{P}$: either $\bar{P}=\bar{G}$ or $\bar{P}\simeq A_{4}\subset S_{4}$.

\subsection{\bm{$\bar{P}=\bar{G}$}} In this case, we have $G=\mu_{2q}P$ for some $q\ge 1$ and, up to conjugation, we have $P=\ST_{n}$ with $8\le n\le 15$. Let $a$ be the index of $P$ in $G$, then $P\cap\mu_{2q}=\mu_{2q/a}$ and $G/P\simeq \mu_{2q}/\mu_{2q/a}\simeq C_{a}$, the singularity $\AA^{2}/G$ is cyclic of type $\frac1{a}(1,d)$ for some integer $d$. Since $G/P$ is generated by the class of the matrix $\zeta_{2q/a}\begin{psmallmatrix} 1 & 0 \\ 0 & 1 \end{psmallmatrix}$, whether $(a,d)$ is a critical pair can be easily decided using the degrees of the monomials generating the ring of $P$-invariants: these can be found in the third column of \cite[Table p.395]{cohen}. 

\subsubsection{$P=\ST_{n}$, $n=8,10,12,13,14,15$} In these cases, $q$ is odd, since otherwise $G=\mu_{2q}P$ contains a strictly larger group generated by pseudoreflections, namely $\ST_{9}$ or $\ST_{11}$. Since $P\cap\mu_{2}=\mu_{2}$ in every case, we get that $2q/a$ is even, $a$ is odd and hence $\AA^{2}/G$ is of type $\rR$.

\subsubsection{$P=\ST_{9}=\mu_{8}S_{4}'$} We have $\mu_{2q}\cap P=\mu_{8}$, $a=q/4$ and the ring of invariants $K[x,y]^{P}$ is generated by two homogeneous polynomials of degrees $8,24$. Since $G/P$ is generated by the class of $\zeta_{2q}$, the singularity is of type $\frac{1}{a}(1,3)$, which is $\neg \rR$ if and only if $a=2,4$ i.e. $q=8,16$.

\subsubsection{$P=\ST_{11}=\mu_{24}S_{4}'$} We have $\mu_{2q}\cap P=\mu_{24}$, $a=q/12$ and the ring of invariants $K[x,y]^{P}$ is generated by two homogeneous polynomials of degrees $24,24$. Since $G/P$ is generated by the class of $\zeta_{2q}$, the singularity is of type $\frac{1}{q'}(1,1)$, which is $\neg \rR$ if and only if $2|q'$ i.e. $24|q$.

\subsection{\bm{$\bar{P}\simeq A_{4}$}}

\begin{lemma}
	The normalizer of $A_{4}$ in $\PGL_{2}(K)$ is $S_{4}$.
\end{lemma}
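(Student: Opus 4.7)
The plan is to bound the normalizer from above by $\mathrm{Aut}(A_{4})\cong S_{4}$ using a Schur-type argument, and then to exhibit the full $S_{4}$ as normalizer.

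First, I would observe that the natural two-dimensional representation $A_{4}'\subset\GL_{2}(K)$ is irreducible. Indeed, $A_{4}'$ contains $D_{2}'=\{\pm\operatorname{Id},\pm\begin{psmallmatrix}0 & i\\ i & 0\end{psmallmatrix},\pm\begin{psmallmatrix}i & 0\\ 0 & -i\end{psmallmatrix},\dots\}$, and one checks directly that no common eigenvector exists for this pair of non-commuting matrices. By Schur's lemma, the centralizer of $A_{4}'$ in $\GL_{2}(K)$ consists of scalars; passing to $\PGL_{2}(K)$, the centralizer $C$ of $A_{4}$ is therefore trivial.

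Next, let $N$ denote the normalizer of $A_{4}$ in $\PGL_{2}(K)$. Conjugation yields a homomorphism $N\to\mathrm{Aut}(A_{4})$ whose kernel is $C=1$, so $N$ injects into $\mathrm{Aut}(A_{4})\cong S_{4}$. The image contains the inner automorphisms $\mathrm{Inn}(A_{4})\cong A_{4}$, so $|N|$ is either $12$ or $24$.

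Finally, to rule out $|N|=12$ it suffices to display a copy of $S_{4}$ inside $\PGL_{2}(K)$ containing $A_{4}$ and normalizing it. This is immediate: the subgroup $S_{4}'\subset\SL_{2}(K)$ introduced in \S\ref{sec:notation} contains $A_{4}'$ as a normal subgroup of index $2$, and passing to the quotient by scalars gives $A_{4}\triangleleft S_{4}\subset\PGL_{2}(K)$. Hence $S_{4}\subseteq N$, and therefore $N=S_{4}$.

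The only mildly non-trivial step is verifying irreducibility of the standard $A_{4}'$-representation so that $C$ is trivial; once that is in hand, the rest is automatic from the classification of automorphisms of $A_{4}$.
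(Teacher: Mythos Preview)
Your argument is correct and takes a genuinely different route from the paper. The paper argues geometrically: it identifies $A_{4}$ as the subgroup of $\PGL_{2}(K)$ preserving a four-point set $S\subset\PP^{1}(K)$, observes that any element of the normalizer must permute the (finite) set of points of $\PP^{1}$ with non-trivial $A_{4}$-stabilizer, concludes that the normalizer is finite, and then invokes the maximality of $S_{4}$ among finite subgroups of $\PGL_{2}(K)$. Your approach is purely algebraic: irreducibility plus Schur give trivial centralizer, so the normalizer injects into $\aut(A_{4})\cong S_{4}$, and the containment $S_{4}\subseteq N$ finishes it.

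One small point worth making explicit in your write-up: the passage ``passing to $\PGL_{2}(K)$, the centralizer $C$ of $A_{4}$ is therefore trivial'' is not quite automatic. A lift $\tilde g\in\GL_{2}(K)$ of an element centralizing $A_{4}$ satisfies $\tilde g\,a\,\tilde g^{-1}=\lambda_{a}a$ for each $a\in A_{4}'$, giving a character $\lambda\colon A_{4}'\to K^{*}$; taking determinants forces $\lambda_{a}^{2}=1$, and since the abelianization of $A_{4}'$ is $C_{3}$ this character is trivial, so $\tilde g$ is scalar by Schur. This is routine, but deserves a sentence.

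Your approach is arguably cleaner and more portable (it would adapt to other groups once you know their automorphism groups), while the paper's approach is more self-contained and avoids any appeal to the structure of $\aut(A_{4})$ or to Schur's lemma.
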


\begin{proof}
	We can identify $A_{4}$ with the subgroup of $\PGL_{2}(K)$ mapping the subset $S=\{0,1,\infty,\zeta_{6}\}\subset\PP^{1}(K)$ to itself. For every $s\in S$, there exists a non-trivial element of $A_{4}$ fixing it, hence the same is true for $g(s)$ for every $g$ in the normalizer. There is only a finite number of points of $\PP^{1}(K)$ fixed by a non-trivial element of $A_{4}$, the normalizer permutes them and hence is finite. Since $S_{4}$ normalizes $A_{4}$ and it is not contained in any strictly larger finite subgroup, we conclude. 
\end{proof}

Fix embeddings $A_{4}\subset S_{4}\subset \PGL_{2}(K)$ as the images of the preferred embeddings $A_{4}'\subset S_{4}'\subset\SL_{2}(K)$. Up to conjugation, we may assume that $G$ is equal either to $\mu_{2q}S_{4}'$ or $(\mu_{4q} \mid \mu_{2q} ; S_{4}' \mid A_{4}')$ \cite[Theorem p.393]{cohen} which both have order $48q$, in particular $\bar{P}=A_{4},\bar{G}=S_{4}\subset\PGL_{2}(K)$. We know that $P$ is conjugate to $\ST_{n}$ for some $4\le n\le 7$: let us show that in fact $P=\ST_{n}$.

Let $A\in\SL_{2}(K)$ be a matrix such that $A P A^{-1}=\ST_{n}$. The image of $\ST_{n}$ for $n=4,\dots,7$ is the preferred embedding $A_{4}\subset\PGL_{2}(K)$, in particular $A P A^{-1}$ and $P$ have the same image $A_{4}\subset\PGL_{2}(K)$. This means that $A$ maps to an element of the normalizer of $A_{4}$ in $\PGL_{2}(K)$, which is $S_{4}\subset\PGL_{2}(K)$, hence $A\in S_{4}'$. Since $G$ is either $\mu_{2q}S_{4}'$ or $(\mu_{4q} \mid \mu_{2q} ; S_{4}' \mid A_{4}')$, there exists some $\lambda\in K^{*}$ such that $\lambda A\in G$. But $P$ is normal in $G$: this implies that $A P A^{-1}=P$, hence $P=\ST_{n}$ for some $n=4, \dots, 7$. 

\subsubsection{$G=\mu_{2q}S_{4}'$} Since $\frac{\zeta_{3}\zeta_{8}}{\sqrt{2}} \begin{psmallmatrix} 1 & i \\ 1 & -i \end{psmallmatrix}\in\ST_{n}$ for every $n=4, \dots, 7$ and $\frac{\zeta_{8}}{\sqrt{2}}\begin{psmallmatrix} 1 & i \\ 1 & -i \end{psmallmatrix}\in S_{4}'$, then $\zeta_{3}\in G$ and hence $3|q$, write $q'=q/3$. Moreover, $q$ is odd, since otherwise $G$ contains $\mu_{12}S_{4}'=\ST_{15}$ which is generated by pseudoreflections, and this is in contradiction with $\bar{P}=A_{4}$. Since $2\nmid q$ then $n\neq 6,7$, because $\zeta_{4}\in\ST_{6},\ST_{7}$. Since $3|q$, then $\ST_{4}\subset\ST_{5}\subseteq P$, hence $P=\ST_{5}=\mu_{6}A_{4}'$ which has index $48q/72=2q'$.

It is easy to see that the class $\omega$ of $\zeta_{2q}\begin{psmallmatrix} \zeta_{8} & 0 \\ 0 & \zeta_{8}^{-1} \end{psmallmatrix}$ has order $2q'$ in $G/P$, which is thus cyclic of even order. The homogeneous polynomials $xy(x^{4}-y^{4}),(x^{4}+y^{4})(x^{4}+6x^{2}y^{2}+y^{4})(x^{4}-6x^{2}y^{2}+y^{4})$ are $P$-invariant and algebraically independent of degrees $6,12$, it follows that they generate $K[x,y]^{P}$ since $|P|=72=6\cdot 12$. Hence, $\omega$ acts with matrix $\begin{psmallmatrix} \zeta_{2q'}^{2+q'} & 0 \\ 0 & \zeta_{2q'}^{4+q'} \end{psmallmatrix}$ on the cotangent space of the quotient. Since $q'$ is odd, the associated singularity is $\neg\rR$ if and only if $(2+q')^{2}\cong(4+q')^{2}\pmod{2q'}$, or equivalently $q=3$ or $q=9$. Hence we get the groups $\mu_{6}S_{4}'$ and $\mu_{18}S_{4}'$.

\subsubsection{$G=(\mu_{4q} \mid \mu_{2q} ; S_{4}' \mid A_{4}')$} As in the preceding case, we can show that $3|q$. If $4\nmid q$ then $P$ contains either $\ST_{10}$ or $\ST_{14}$, this is in contradiction with $\bar{P}=A_{4}$. We may thus assume $12|q$ and write $q'=q/12$, $G=(\mu_{48q'} \mid \mu_{24q'} ; S_{4}' \mid A_{4}')$.

Since $\ST_{n}\subseteq\ST_{7}\subset G$ for every $n=4, \dots, 7$, then $P=\ST_{7}=\mu_{12}A_{4}'$, it has index $48q/144=4q'$. The class $\omega$ of $\zeta_{48q'}\begin{psmallmatrix} \zeta_{8} & 0 \\ 0 & \zeta_{8}^{-1} \end{psmallmatrix}$ has order $4q'$ in $G/P$, which is thus cyclic of even order. The homogeneous polynomials $x^{2}y^{2}(x^{4}-y^{4})^{2},(x^{4}+y^{4})(x^{4}+6x^{2}y^{2}+y^{4})(x^{4}-6x^{2}y^{2}+y^{4})$ are $P$-invariant and algebraically independent of degrees $12,12$, it follows that they generate $K[x,y]^{P}$ since $|P|=144=12\cdot 12$. Hence, $\omega$ acts with matrix $\begin{psmallmatrix} \zeta_{4q'} & 0 \\ 0 & \zeta_{4q'}^{1+2q'} \end{psmallmatrix}$ on the cotangent space of the quotient and thus generates a singularity of type $\frac{1}{4q'}(1,1+2q')$, which is $\neg\rR$ if and only if $q'$ is odd. Hence we get the groups $(\mu_{48q'} \mid \mu_{24q'} ; S_{4}' \mid A_{4}')$ for $q'$ odd.

\section{The case \texorpdfstring{$\bar{G}\simeq A_{5}$}{A5}}
\begin{proposition}
	If $\bar{G}\simeq A_{5}$, then $\AA^{2}/G$ is $\neg\rR$ if and only if $G$ is conjugate to $\mu_{2q}A_{5}'$ for $q=4,8,12,16,20,24,36,40,72$ or $60|q$.
\end{proposition}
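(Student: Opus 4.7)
The plan is to follow closely the strategy executed in the $\bar{G}\simeq A_{4}$ and $\bar{G}\simeq S_{4}$ cases. A necessary condition for $\AA^{2}/G$ to be $\neg\rR$ is that $\bar{G}/\bar{P}$ be cyclic; since $A_{5}$ is simple and $\bar{P}\trianglelefteq \bar{G}$, this forces $\bar{P}=\bar{G}$. Because $A_{5}$ is perfect, the Cohen classification \cite[Theorem p.393]{cohen} lists a single family of subgroups of $\GL_{2}(K)$ with image $A_{5}$ in $\PGL_{2}(K)$, namely $\mu_{2q}A_{5}'$, so I may restrict to $G=\mu_{2q}A_{5}'$ and determine the values of $q$ for which $\AA^{2}/G$ is $\neg\rR$.

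With $\bar{P}=\bar{G}$, the argument of \S\ref{a4a4} applies verbatim: setting $a=[G:P]$, one has $P\cap\mu_{2q}=\mu_{2q/a}$, the quotient $G/P\simeq C_{a}$ is generated by the class of $\zeta_{2q/a}\operatorname{Id}$, and $\AA^{2}/G$ is a cyclic singularity of type $\frac{1}{a}(1,d)$ where $d$ is read off from the degrees of the two fundamental invariants of $P$ listed in \cite[Table p.395]{cohen}. Whether $(a,d)$ is a critical pair is then decided by Corollary~\ref{cor:critical}.

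It remains to run through the seven Shephard--Todd groups $\ST_{16},\dots,\ST_{22}$, i.e.\ those with icosahedral image. For each, the routine is as follows: first, rule out the values of $q$ for which $G=\mu_{2q}\ST_{n}$ contains a strictly larger reflection group from the list, which forces parity and divisibility constraints on $q$ entirely analogous to those used in the $A_{4}$ and $S_{4}$ subsections; second, compute $\mu_{2q}\cap\ST_{n}$ and the resulting index $a$; third, extract the type $\frac{1}{a}(1,d)$ from the invariant degrees of $\ST_{n}$; fourth, apply Corollary~\ref{cor:critical} to check the critical-pair condition. The sporadic small values of $q$ in the statement should arise when $a$ divides a small multiple of the central root appearing in $\ST_{n}$, while the infinite family $60\mid q$ is expected from the largest icosahedral reflection group, in analogy with how $\ST_{7}$ yielded $12\mid q$ for $A_{4}$ and $\ST_{11}$ yielded $24\mid q$ for $S_{4}$.

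The main obstacle is purely computational: identifying each $\ST_{n}$ ($16\le n\le 22$) concretely as a central extension of $A_{5}'$, reading off its two invariant degrees, and running the critical-pair test. No new conceptual difficulty beyond the $S_{4}$ analysis is expected; the risk is simply that of tracking the seven subcases carefully enough for the sporadic list $\{4,8,12,16,20,24,36,40,72\}$ together with the divisibility condition $60\mid q$ to emerge without omission or duplication.
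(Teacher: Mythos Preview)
Your plan is correct and matches the paper's proof essentially line for line: the paper also uses simplicity of $A_{5}$ to force $\bar{P}=\bar{G}$, reduces to $G=\mu_{2q}A_{5}'$ with $P=\ST_{n}$ for $16\le n\le 22$, dismisses $n=16,18,20$ by the parity argument you describe, and for $n=17,19,21,22$ reads off the singularity type from the invariant degrees to obtain respectively $q\in\{20,40\}$, $60\mid q$, $q\in\{12,24,36,72\}$, and $q\in\{4,8,16\}$. The only thing left is to actually perform those four short computations, which are entirely routine given Cohen's table.
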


A necessary condition for $\AA^{2}/G$ to be not of type $\rR$ is that $\bar{G}/\bar{P}$ is cyclic, hence $\bar{P}=\bar{G}$ since $A_{5}$ is simple. This implies that $G=\mu_{2q}P$ for some $q$ and hence $G/P$ is cyclic generated by $\zeta_{2q}$. Let $a$ be the index of $P$, we have $\mu_{2q}\cap P=\mu_{2q/a}$, $G/P\simeq C_{a}$ and $\AA^{2}/G$ is of type $\frac1a(1,d)$ for some $d$. Since $G/P$ is generated by $\zeta_{2q}$, whether $(a,d)$ is a critical pair can be easily decided using the degrees of the monomials generating the ring of $P$-invariants: these can be found in the third column of \cite[Table p.395]{cohen}. 

Up to conjugation, $P=\ST_{n}$ for $n=16, \dots, 22$. 

\subsection{$P=\ST_{n},~n=16,18,20$} In these cases $q$ is odd, since otherwise $G=\mu_{2q}\ST_{n}$ contains a strictly larger group generated by pseudoreflections, namely $\ST_{17}$, $\ST_{19}$ or $\ST_{21}$. Since $P\cap\mu_{2}=\mu_{2}$, we get that $2q/a$ is even, $a$ is odd and hence $\AA^{2}/G$ is of type $\rR$.

\subsection{$P=\ST_{17}=\mu_{20}A_{5}'$} We have $a=q/10$ and the singularity is of type $\frac{1}{a}(1,3)$, which is $\neg\rR$ if and only if $a=2,4$, $q=20,40$.

\subsection{$P=\ST_{19}=\mu_{60}A_{5}'$} We have $a=q/30$ and the singularity is of type $\frac{1}{a}(1,1)$, which is $\neg\rR$ if and only if $2|a$, $60|q$.

\subsection{$P=\ST_{21}=\mu_{12}A_{5}'$} We have $a=q/6$ and the singularity is of type $\frac{1}{a}(1,5)$, which is $\neg\rR$ if and only if $a=2,4,6,12$, $q=12,24,36,72$.

\subsection{$P=\ST_{22}=\mu_{4}A_{5}'$} We have $a=q/2$ and the singularity is of type $\frac{1}{a}(3,5)$. If it is of type $\neq \rR$, then $3^{2}\cong 5^{2}\pmod{a}$, i.e. $a\mid 16$. If $a\mid 16$, since $11$ is the inverse of $3$ modulo $16$ up to reparametrizing we have a singularity of type $\frac1a(1,5\cdot 11)=\frac1a(1,7)$ which is $\neg\rR$ if and only if $a=2,4,8$, $q=4,8,16$.

\begin{landscape}

\section{Non-abelian finite subgroups \texorpdfstring{$G\subset \GL_{2}(K)$}{GL2} such that \texorpdfstring{$\AA^{2}/G$}{the quotient} is not of type \texorpdfstring{$\rR$}{R}}

\renewcommand{\arraystretch}{1.5}

\begin{center}\begin{tabular}{|| c | c | c | c | c ||} 
	\hline
	Family												& 	$\neg\rR$ sub-family			&	Order		&	Order of center	& Group/center\\
	\hline\hline
	$(\mu_{4q} \mid \mu_{2q} ; D_{m}' \mid C_{2m}')$	&	$2|m|q$ and $2\nmid q'$			&	$4mq$		&	$2q$			&	$D_{m}$		\\
	\hline
	$(\mu_{4q} \mid \mu_{2q} ; D_{2m}' \mid D_{m}')$	&	$(2q',m'+q')$ critical 			&	$8mq$		&	$2q$			&	$D_{2m}$	\\
	\hline
	$\mu_{2q}D_{m}'$ 									&	$2\nmid q$ and $m|q$, or		&	$4mq$		&	$2q$			&	$D_{m}$		\\
														&	$2|q$ and $(q',m')$ critical	&				&					&				\\
	\hline
	$(\mu_{4q} \mid \mu_{q} ; D_{2m+1}'\mid C_{2m+1}')$ 	& 	$\emptyset$ 	&	$4(2m+1)q$ if $2\mid q$		&	$2q$ if $2\mid q$			&	$D_{2m+1}$	\\
														&						&	$2(2m+1)q$ if $2\nmid q$	&	$q$ if $2\nmid q$			&				\\
	\hline
	$(\mu_{6q} \mid \mu_{2q} ; A_{4}' \mid D_{2}')$		&	$q=4,8$							&	$24q$		&	$2q$			&	$A_{4}$		\\
	\hline
	$\mu_{2q}A_{4}'$									&	$4|q$							&	$24q$		&	$2q$			&	$A_{4}$		\\
	\hline
	$\mu_{2q}S_{4}'$									&	$q=3,8,9,16$ or $24|q$			&	$48q$		&	$2q$			&	$S_{4}$		\\
	\hline
	$(\mu_{4q} \mid \mu_{2q} ; S_{4}' \mid A_{4}')$		&	$12|q$, $24\nmid q$				&	$48q$		&	$2q$			&	$S_{4}$		\\
	\hline
	$\mu_{2q}A_{5}'$									&	$q=4,8,12,16,20,24,36,40$ or $60|q$	&	$120q$	&	$2q$			&	$A_{5}$		\\
	\hline
\end{tabular}\end{center}

\vspace{0.5cm}

\subsection*{Notation} $K$ is any algebraically closed field of characteristic $0$. The families of groups in the first column are defined in \S\ref{sec:notation} as subgroups $\GL_{2}(K)$. Every non-abelian finite group with a faithful $2$-dimensional representation is isomorphic to at least one in the table, sometimes to a few of them e.g. $\mu_{2q}A_{4}'\simeq (\mu_{6q} \mid \mu_{2q} ; A_{4}' \mid D_{2}')$ if $q$ is prime with $3$. The second column indicates which groups in each family generate a singularity of type $\rR$. To check whether a group is of type $\rR_{2}$, one has to check the possible embeddings in $\GL_{2}(K)$, e.g. $C_{16}\times A_{4}'\simeq\mu_{32}A_{4}'\simeq (\mu_{96} \mid \mu_{32} ; A_{4}' \mid D_{2}')$ has an embedding inducing an $\rR$-singularity and one inducing a $\neg \rR$-singularity.

Each family is indexed by one or two positive integers $m,q\ge 1$, we write $q'=q/\gcd(m,q)$, $m'=m/\gcd(m,q)$. A pair of positive integers $(n,d)$ is \emph{critical} if $d^{2}\cong 1\pmod{n}$, $2|n$ and $d\cong \pm1\pmod{2^{a}}$, where $2^{a}$ is the largest power of $2$ dividing $n$, or equivalently if $n$ is even and $d\cong\pm 1$ modulo every prime power dividing $n$.

\end{landscape}

\bibliographystyle{amsalpha}
\bibliography{tqs2}

\end{document}